\newtheorem{lemma}{Lemma}[section]
\newtheorem*{example}{Example}
\newtheorem{thm}{Theorem}[section]
\theoremstyle{definition}
\newtheorem{definition}{Definition}[section]
\theoremstyle{remark}
\newtheorem*{remark}{Remark}
\DeclareMathOperator{\im}{im}
\DeclareMathOperator{\st}{st}
\DeclareMathOperator{\lk}{lk}
\DeclareMathOperator{\degr}{deg}
\newcommand{\reducedhom}{\tilde{H}}
\newcommand*\cl[1]{\overline{#1}}
\newcommand{\vr}[2]{{\text{VR}_{#1}(#2)}}
\newcommand{\transpose}[1]{#1^\top}
\newcommand{\norm}[1]{\left\lVert#1\right\rVert}
\newcommand{\wordv}[1]{v_\text{#1}}
\DeclarePairedDelimiterX\setc[2]{\{}{\}}{\,#1 \;\delimsize\vert\; #2\,}
\newcommand\restr[2]{{
		\left.\kern-\nulldelimiterspace 
		#1 
		\vphantom{\big|} 
		\right|_{#2} 
}}
\newcommand{\zmod}[1]{\mathbb{Z}/#1\mathbb{Z}}
\icmltitlerunning{Local Homology of Word Embeddings}
\begin{document}

\twocolumn[
\icmltitle{Local Homology of Word Embeddings}

\icmlsetsymbol{equal}{*}

\begin{icmlauthorlist}
\icmlauthor{Tadas Tem\v{c}inas}{oxf}
\end{icmlauthorlist}

\icmlaffiliation{oxf}{This work was done as part of a master thesis at the University of Oxford}
\icmlcorrespondingauthor{Tadas Tem\v{c}inas}{temcinas@gmail.com}
\icmlkeywords{topological data analysis, natural language processing, local homology}
\vskip 0.3in
]

\printAffiliationsAndNotice{\icmlEqualContribution}


\begin{abstract}
Topological data analysis (TDA) has been widely used to make progress on a number of problems. However, it seems that TDA application in natural language processing (NLP) is at its infancy. In this paper we try to bridge the gap by arguing why TDA tools are a natural choice when it comes to analysing word embedding data. We describe a parallelisable unsupervised learning algorithm based on local homology of datapoints and show some experimental results on word embedding data. We see that local homology of datapoints in word embedding data contains some information that can potentially be used to solve the word sense disambiguation problem.
\end{abstract}


\section{Introduction}

Topological data analysis (TDA) has become a very active and broad area of research, that uses tools from topology to analyse data. In particular, persistent homology seems to be the TDA tool of choice for many applications \citep{roadmap_persistence, oudot}. Tools that will be discussed in this work are those leveraging local topological structure, more concretely -- local homology. On the practical side, implementations of such tools very often can be easily parallelisable.

Word embedding is a collective term for ways to represent words of a natural language as vectors in a high-dimensional real vector space. We will see in the following chapters that datasets coming from word embeddings have interesting topology. Hence natural language processing (NLP) seems to be a natural domain of TDA applications. Despite this fact, to the best of our knowledge, only a few attempts at using TDA techniques to analyse language data have been published \citep{topology_NLP1, topology_NLP2, topology_NLP3, topology_NLP4}. One of the aims of this work is to contribute to closing the gap between TDA and NLP by applying TDA techniques to NLP data.

Intuitively, stratification is a decomposition of a topological space into manifold-like pieces. When thinking about stratification learning and word embeddings, it seems intuitive that vectors of words corresponding to the same broad topic would constitute a structure, which we might hope to be a manifold. Hence, for example, by looking at the intersections between those manifolds or singularities on the manifolds (both of which can be recovered using local homology-based algorithms \citep{nanda_strat}) one might hope to find vectors of homonyms like `bank' (which can mean either a river bank, or a financial institution). This, in turn, has potential to help solve the word sense disambiguation (WSD) problem in NLP, which is pinning down a particular meaning of a word used in a sentence when the word has multiple meanings.

In this work we present a clustering algorithm based on local homology, which is more relaxed\footnote{meaning that on the same data the produced clusters is the result on merging the clusters produced by the other algorithms} that the stratification-based clustering discussed in \citep{nanda_strat} and the sheaf-theoretic clustering from \citep{brown_sheaf_decomposition}, where the sheaf is taken to be the local homology sheaf. We implement the algorithm and make the code available \citep{code}, and finally we discuss the results on two datasets of word vectors that come from pre-trained word embeddings. We find that the first local homology groups of some word vectors can reflect the fact that the same token has different meanings (e.g. `bank'). The second local homology groups in this context can be interpreted as counting the number of different `similarity loops' in the neighbourhood of a word vector.

\section{Local homology}

Let us remind ourselves that given a CW complex $X$ we can define a partial order on its cells. By definition of a CW complex we know that if $\sigma, \, \tau$ are cells such that $\sigma \cap \cl{\tau} \neq \emptyset$, then we have $\sigma \subseteq \cl{\tau}$ and we say that $\sigma$ is a face of $\tau$, and $\tau$ is a co-face of $\sigma$. Write $\sigma \leq \tau$ iff $\sigma$ is a face of $\tau$. This relation defines a partial order on the set of cells of $X$. Now we can define the star and link of a cell by $\st(\sigma) = \setc*{\tau \in X}{\sigma \leq \tau}$ and $\lk(\sigma) = \setc*{\tau \in X}{\exists \rho \in X. \: \tau, \, \sigma \leq \rho \wedge \nexists \rho' \in X. \: \rho' \leq \tau, \, \sigma}$. 

As usual, we write $H_\bullet(X; G)$ for the homology of $X$ with coefficients in an abelian group $G$ and $H_\bullet(X, A; G)$ for the relative homology with coefficients in $G$. If the group $G$ is not specified, it means that $G = \mathbb{Z}$.



\begin{definition}\label{def_localhom}
	Let $X$ be a CW complex and let $\sigma$ be a cell in $X$. The local homology of $\sigma$ in $X$ is $H_\bullet^\sigma = H_\bullet(\cl{\st(\sigma)}, \partial \st(\sigma))$.
	Rank of the free part of $H_n^\sigma$ is called the $n$-th local betti number of $\sigma$ in $X$.
\end{definition}

\begin{definition}
	A CW complex $X$ is called regular iff the attaching map for each cell is a homeomorphism.
\end{definition}

The lemmas below are stated in the setting of finite simplicial complexes since in the end when we compute homology for the applications, we use them. However, the analogous lemmas hold in a more general setting of finite regular CW complexes.

We remind ourselves that given a finite simplicial complex $X$ we can orient it by defining a total order on the vertices $\{v_0, v_1, \ldots, v_N\}$.\footnote{We do this when calculating simplicial homology.} We can specify an $n$-simplex $\sigma$ by an $(n+1)$-tuple of vertices $(v_{\sigma_0}, v_{\sigma_1}, \ldots, v_{\sigma_n})$ where $\sigma_i < \sigma_{i+1}$. Also, let $(v_{\sigma_0}, v_{\sigma_1}, \ldots,  \hat{v_{\sigma_i}}, \ldots, v_{\sigma_n})$ be the face of $\sigma$ specified by the vertices $\sigma \setminus \{v_{\sigma_i}\}$. Now to calculate local homology of a simplex, we can take star of the simplex, pretend that it is a simplicial complex and then calculate its simplicial homology. This is formulated more precisely in the following lemma:

\begin{lemma} \label{combinatorial_localhom}
	Take a simplex $\sigma$ in a finite simplicial complex $X$. Let $S_n(\sigma) = \setc*{\tau \subseteq \sigma}{\left\vert{\tau}\right\vert = n+1}$. Let $C_n^\sigma = \mathbb{Z}[S_n]$ and define $\partial_n: C_n^\sigma \to C_{n-1}^\sigma$ on the basis elements: $\partial_n(\tau) = \sum_{i=0}^{n}(-1)^i(v_{\tau_0}, v_{\tau_1}, \ldots,  \hat{v_{\tau_i}}, \ldots, v_{\tau_n})$. Set $(v_{\tau_0}, v_{\tau_1}, \ldots,  \hat{v_{\tau_i}}, \ldots, v_{\tau_n}) = 0$ iff the simplex specified by $(v_{\tau_0}, v_{\tau_1}, \ldots,  \hat{v_{\tau_i}}, \ldots, v_{\tau_n})$ is not in $S_{n-1}$. Then we have that $(C_\bullet^\sigma, \partial)$ is a chain complex and $H_\bullet(C_\bullet^\sigma) = H_\bullet^\sigma$.
\end{lemma}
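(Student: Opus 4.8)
The plan is to recognise that the combinatorially defined complex $(C_\bullet^\sigma, \partial)$ is nothing other than the relative simplicial chain complex of the pair $(\cl{\st(\sigma)}, \partial\st(\sigma))$, so that the lemma reduces to the standard fact that relative simplicial homology is computed by the quotient chain complex. Once this identification is set up, everything comes down to matching generators and boundary operators on the nose, and the conclusion $H_\bullet(C_\bullet^\sigma) = H_\bullet^\sigma$ is then immediate from Definition~\ref{def_localhom}.

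First I would pin down $\partial\st(\sigma)$ as a subcomplex. The closed star $\cl{\st(\sigma)}$ is the smallest subcomplex containing the open star, i.e. all faces of simplices having $\sigma$ as a face, and $\partial\st(\sigma) = \cl{\st(\sigma)} \setminus \st(\sigma)$ consists exactly of those simplices of $\cl{\st(\sigma)}$ that do \emph{not} contain $\sigma$ as a face. I would check this is a genuine subcomplex: if $\sigma \not\subseteq \tau$ and $\rho \subseteq \tau$, then $\sigma \subseteq \rho$ would force $\sigma \subseteq \tau$, a contradiction, so every face of a simplex missing $\sigma$ again misses $\sigma$. Hence $C_\bullet(\partial\st(\sigma))$ is a subcomplex of $C_\bullet(\cl{\st(\sigma)})$ and the relative groups $C_n(\cl{\st(\sigma)}, \partial\st(\sigma)) = C_n(\cl{\st(\sigma)})/C_n(\partial\st(\sigma))$ are well defined.

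Next I would identify the generators. A basis for this quotient is given by the $n$-simplices of $\cl{\st(\sigma)}$ not lying in $\partial\st(\sigma)$, namely the $n$-simplices having $\sigma$ as a face—exactly the generating set $S_n$ of $C_n^\sigma$. This yields a canonical isomorphism $C_n^\sigma \cong C_n(\cl{\st(\sigma)}, \partial\st(\sigma))$ for each $n$. Under the quotient map, the induced boundary sends a basis simplex $\tau$ to $\sum_i (-1)^i (v_{\tau_0}, \ldots, \hat{v_{\tau_i}}, \ldots, v_{\tau_n})$ with every face lying in $\partial\st(\sigma)$ killed; since a face lies in $\partial\st(\sigma)$ precisely when it no longer contains $\sigma$, i.e. precisely when it is not in $S_{n-1}$, this is exactly the rule prescribed in the lemma. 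Thus the family of isomorphisms is a chain isomorphism.

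The remaining points are bookkeeping. That $(C_\bullet^\sigma, \partial)$ really is a chain complex ($\partial^2 = 0$) comes for free, inherited from the ambient simplicial $\partial^2 = 0$ through the quotient, rather than requiring a separate combinatorial computation; and applying homology to the chain isomorphism gives $H_\bullet(C_\bullet^\sigma) \cong H_\bullet(\cl{\st(\sigma)}, \partial\st(\sigma)) = H_\bullet^\sigma$. The only genuinely delicate step, and the one I would write out most carefully, is the combinatorial description of $\partial\st(\sigma)$ together with the verification that the quotient basis is precisely the set of cofaces of $\sigma$; once that bijection is secured, the matching of boundary operators and the final conclusion follow at once.
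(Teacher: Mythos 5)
Your proposal is correct and takes essentially the same route the paper intends: you identify $(C_\bullet^\sigma, \partial)$ with the relative simplicial chain complex of the pair $(\cl{\st(\sigma)}, \partial\st(\sigma))$ --- verifying that $\partial\st(\sigma)$ is a subcomplex, that the quotient basis consists exactly of the cofaces of $\sigma$, and that the ``set faces outside $S_{n-1}$ to zero'' rule is precisely the quotient boundary --- after which $\partial^2 = 0$ is inherited and $H_\bullet(C_\bullet^\sigma) \cong H_\bullet(\cl{\st(\sigma)}, \partial\st(\sigma)) = H_\bullet^\sigma$ follows from the standard agreement of relative simplicial homology with the homology of the pair in Definition~\ref{def_localhom}. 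One minor remark: you silently (and correctly) read $S_n(\sigma)$ as the set of cofaces $\setc*{\tau \in X}{\sigma \subseteq \tau, \left\vert\tau\right\vert = n+1}$, repairing the reversed inclusion $\tau \subseteq \sigma$ in the lemma as printed, which would otherwise make $C_\bullet^\sigma$ the chain complex of the simplex $\sigma$ itself.
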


	

\begin{remark}
	Instead of taking the integral homology in the Definition \ref{def_localhom} we could take homology with coefficients in an abelian group $G$ and then we would get a definition of local homology with coefficients in $G$. In this setting the previous lemma holds just as well if we take $C_n^\sigma = \mathbb{Z}[S_n] \otimes G$.
\end{remark}

The following lemma shows that if we think about a simplicial complex as a generalisation of a graph (on the basis that 1-skeleton of a simplicial complex is an undirected graph without self-loops or multiple edges between the same vertices), then we can think about local homology as a generalisation of a degree of a vertex. Hopefully this will provide some intuition of how local homology works.

\begin{lemma}\label{lochom_as_deg}
	Let $X$ be a 1-dimensional finite simplicial complex. For any 0-simplex $v \in X$ let $\degr(v) = \left\vert{\setc*{e \in X}{v \in e}}\right\vert$. Then for any 0-simplex $v$: $$H_n^v = \left\{
	\begin{array}{ll}
	\mathbb{Z}^{\degr(v) - 1} & \quad n=1\\
	0 & \quad \text{otherwise}
	\end{array}
	\right.$$
\end{lemma}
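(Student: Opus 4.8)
The plan is to apply Lemma \ref{combinatorial_localhom} directly and then compute the resulting chain complex, which is extremely short because $X$ is one-dimensional. First I would identify the star of $v$: since $X$ has dimension one, the cells having $v$ as a face are $v$ itself together with the $\degr(v)$ edges incident to $v$. So in the notation of Lemma \ref{combinatorial_localhom} we get $S_0 = \{v\}$, while $S_1 = \{e_1, \dots, e_{\degr(v)}\}$ lists the edges containing $v$, and $S_n = \emptyset$ for $n \geq 2$. The point I want to stress here is that the opposite endpoints $w_i$ of the edges $e_i$ do \emph{not} lie in $S_0$: a vertex $w_i \neq v$ is not a coface of $v$, hence not in $\st(v)$. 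Thus $C_0^v \cong \mathbb{Z}$ (generated by $v$), $C_1^v \cong \mathbb{Z}^{\degr(v)}$ (generated by the $e_i$), and $C_n^v = 0$ otherwise, which already forces $H_n^v = 0$ for all $n \neq 0,1$.

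Next I would analyse the single nontrivial boundary map $\partial_1 \colon C_1^v \to C_0^v$. Writing each edge as an ordered pair of its two vertices, its boundary in the ambient complex is $\pm(w_i - v)$; but the vanishing rule in Lemma \ref{combinatorial_localhom} sets the $w_i$ term to $0$ precisely because $w_i \notin S_0$. Hence $\partial_1 e_i = \pm v$ for every $i$, so with respect to the chosen bases $\partial_1$ is represented by the $1 \times \degr(v)$ matrix all of whose entries are $\pm 1$ (the sign of the $i$-th entry being determined by whether $v$ precedes or follows $w_i$ in the chosen total order).

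From here the computation is immediate. Assuming $\degr(v) \geq 1$, the image of $\partial_1$ is the subgroup of $\mathbb{Z}$ generated by $\pm 1$, i.e.\ all of $\mathbb{Z}$, so $\partial_1$ is surjective and $H_0^v = C_0^v/\im \partial_1 = 0$. Since there is no $\partial_2$, we have $H_1^v = \ker \partial_1$, which is the kernel of a surjection $\mathbb{Z}^{\degr(v)} \twoheadrightarrow \mathbb{Z}$ and therefore has rank $\degr(v) - 1$; being a subgroup of a free abelian group it is itself free, so $H_1^v \cong \mathbb{Z}^{\degr(v)-1}$, as claimed.

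I do not expect a genuine obstacle: the only subtlety is the bookkeeping in the first step, namely recognising that the local (relative) chain complex retains only the cofaces of $v$, so the neighbouring vertices drop out and every edge boundary collapses to $\pm v$. Everything after that is linear algebra over $\mathbb{Z}$. I would also flag the degenerate case $\degr(v) = 0$ (an isolated vertex), where the stated formula is vacuous and one instead gets $H_0^v \cong \mathbb{Z}$; the statement implicitly assumes $\degr(v) \geq 1$.
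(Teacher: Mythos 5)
Your proof is correct and takes essentially the same route as the paper: apply Lemma \ref{combinatorial_localhom}, observe that for a vertex $v$ in a 1-dimensional complex the local chain complex is just $0 \to \mathbb{Z}^{\degr(v)} \xrightarrow{\partial_1} \mathbb{Z} \to 0$ with $\partial_1 e_i = \pm v$ (the opposite endpoints being killed by the vanishing rule, since they are not cofaces of $v$), and read off $H_1^v = \ker\partial_1 \cong \mathbb{Z}^{\degr(v)-1}$ with all other groups vanishing. Your flag of the degenerate case $\degr(v) = 0$, where $H_0^v \cong \mathbb{Z}$ and the stated formula fails, is a legitimate observation: the lemma implicitly assumes $\degr(v) \geq 1$.
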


	
	

The following lemma characterises the local homology in terms of the link of a simplex instead of its star. We will use this viewpoint when analysing the computational results on data.

\begin{lemma}\label{lem:local_hom_link}
	If $X$ is a finite simplicial complex and $\sigma$ is a $k$-simplex, then for $n \geq k + 1$ we have $H_n^\sigma = \reducedhom_{n-k-1}(\lk(\sigma))$.
\end{lemma}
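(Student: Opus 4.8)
The plan is to reduce the relative homology defining $H_\bullet^\sigma$ to the reduced homology of the link by exploiting the join structure of the closed star together with the long exact sequence of a pair. First I would establish the two combinatorial identities $\cl{\st(\sigma)} = \cl{\sigma} * \lk(\sigma)$ and $\partial\st(\sigma) = \partial\cl{\sigma} * \lk(\sigma)$, where $\cl{\sigma}$ denotes the closed $k$-simplex on the vertices of $\sigma$, $\partial\cl{\sigma}$ its boundary, and $*$ the simplicial join (with the usual convention that the empty simplex participates, so that each factor embeds as a subcomplex of the join). Concretely, every simplex of $\cl{\st(\sigma)}$ is a face of some $\rho \geq \sigma$, hence of the form $\alpha * \beta$ with $\alpha \subseteq \sigma$ a face and $\beta \in \lk(\sigma)$; and such a simplex contains $\sigma$ — i.e.\ lies in the open star — precisely when $\alpha = \sigma$. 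Thus the boundary $\partial\st(\sigma) = \cl{\st(\sigma)} \setminus \st(\sigma)$ consists of exactly the $\alpha * \beta$ with $\alpha \subsetneq \sigma$, which is the subcomplex $\partial\cl{\sigma} * \lk(\sigma)$. Pinning down these two identifications, including the empty-simplex bookkeeping and the degenerate case $\lk(\sigma) = \emptyset$ when $\sigma$ is maximal, is the step that needs the most care and is the main obstacle.

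Next I would observe that $\cl{\st(\sigma)} = \cl{\sigma} * \lk(\sigma)$ is contractible, since $\cl{\sigma}$ is contractible and a join with a contractible factor is contractible. Feeding this into the long exact sequence of the pair $(\cl{\st(\sigma)}, \partial\st(\sigma))$ in reduced homology, the absolute terms $\reducedhom_\bullet(\cl{\st(\sigma)})$ all vanish, so the connecting homomorphisms become isomorphisms and yield $H_n^\sigma \cong \reducedhom_{n-1}(\partial\st(\sigma))$ for every $n$.

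Finally I would identify the homotopy type of the boundary. Since $\partial\cl{\sigma}$ is the boundary of a $k$-simplex, it is homeomorphic to $S^{k-1}$, so $\partial\st(\sigma) = \partial\cl{\sigma} * \lk(\sigma) \cong S^{k-1} * \lk(\sigma)$. Using the standard facts that $S^{k-1}$ is the $k$-fold join of $S^0$, that the join is associative, and that $S^0 * Y \simeq \Sigma Y$, this space is homotopy equivalent to the $k$-fold suspension $\Sigma^k \lk(\sigma)$. The suspension isomorphism then gives $\reducedhom_{n-1}(\Sigma^k \lk(\sigma)) \cong \reducedhom_{n-1-k}(\lk(\sigma))$, and composing with the isomorphism from the previous paragraph yields $H_n^\sigma \cong \reducedhom_{n-k-1}(\lk(\sigma))$. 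The hypothesis $n \geq k+1$ keeps the target degree $n-k-1 \geq 0$, away from the $(-1)$-degree conventions for the empty complex where the edge cases would otherwise require separate attention.
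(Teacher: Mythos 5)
Your proof is correct, but it takes a genuinely different route from the paper. You argue at the level of spaces: the join decompositions $\cl{\st(\sigma)} = \cl{\sigma} * \lk(\sigma)$ and $\partial\st(\sigma) = \partial\cl{\sigma} * \lk(\sigma)$, contractibility of the closed star (a join with a contractible factor), the long exact sequence of the pair to get $H_n^\sigma \cong \reducedhom_{n-1}(\partial\st(\sigma))$, and finally $\partial\cl{\sigma} * \lk(\sigma) \cong S^{k-1} * \lk(\sigma) \simeq \Sigma^k \lk(\sigma)$ together with the suspension isomorphism. Each step is sound, and you rightly flag the only delicate points: the empty-simplex bookkeeping in the join identities and the degenerate case of an empty link, both of which are absorbed by the reduced-homology conventions once $n \geq k+1$. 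The paper instead works at the chain level, using the combinatorial model of Lemma \ref{combinatorial_localhom}: the assignment $\tau \mapsto \tau \setminus \sigma$ is a bijection between the $n$-simplices of $\st(\sigma)$ and the $(n-k-1)$-simplices of $\lk(\sigma)$, with $\sigma$ itself playing the role of the empty simplex (i.e.\ the augmentation); ordering the vertices of $\sigma$ first, this commutes with the boundary maps up to a uniform sign, because the terms killed by the zeroing convention of Lemma \ref{combinatorial_localhom} are exactly those deleting a vertex of $\sigma$. Hence $C_\bullet^\sigma$ is identified with the augmented chain complex of $\lk(\sigma)$ shifted up by $k+1$, and the isomorphism $H_n^\sigma \cong \reducedhom_{n-k-1}(\lk(\sigma))$ drops out in all degrees $n \geq k+1$ at once. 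The trade-off: the paper's argument is elementary and self-contained given its Lemma \ref{combinatorial_localhom}, gives the version with coefficients in $G$ simply by tensoring (as noted in the remark), and matches what the algorithm actually computes; your argument invokes heavier machinery (topological joins, associativity, the suspension isomorphism) but buys strictly more, namely the homotopy-type statements $\cl{\st(\sigma)} \simeq \ast$ and $\partial\st(\sigma) \simeq \Sigma^k \lk(\sigma)$ rather than just the homology groups, and it makes the degree shift $n \mapsto n-k-1$ geometrically transparent.
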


\begin{remark}
	The above lemma also holds if we calculate local homology with coefficients in an abelian group $G$. See apendix for details.
\end{remark}

\begin{example}
	Let $X_k$ be a simplicial complex that is $k$ triangles glued on a common edge. $X_k$ is the simplicial closure of $\setc*{\{0,1,i+2\}}{i \in \{0, 1, \ldots, k-1\}}$. In Figure \ref{fig:x3} we can see $X_3$. Let $\sigma := \{0,1\}$ be the common edge. Then $\lk(\sigma) = \{\{2\}, \{3\}, \ldots, \{k+1\}\}$ consists of $k$ 0-simplices. In this case it is easy to calculate the homology of the link - it has $k$ connected components hence 0th homology will be $\mathbb{Z}^k$. Since there are no other simplices but the 0-dimensional ones, all the higher homology groups are trivial. Therefore by Lemma \ref{lem:local_hom_link} we have $H_2^\sigma = \mathbb{Z}^{k-1}$ and all the other local homology groups being trivial.
\end{example}

\begin{figure}[H]
	\caption{$X_3$}
	\centering
	\includegraphics[width=2cm, height=3cm]{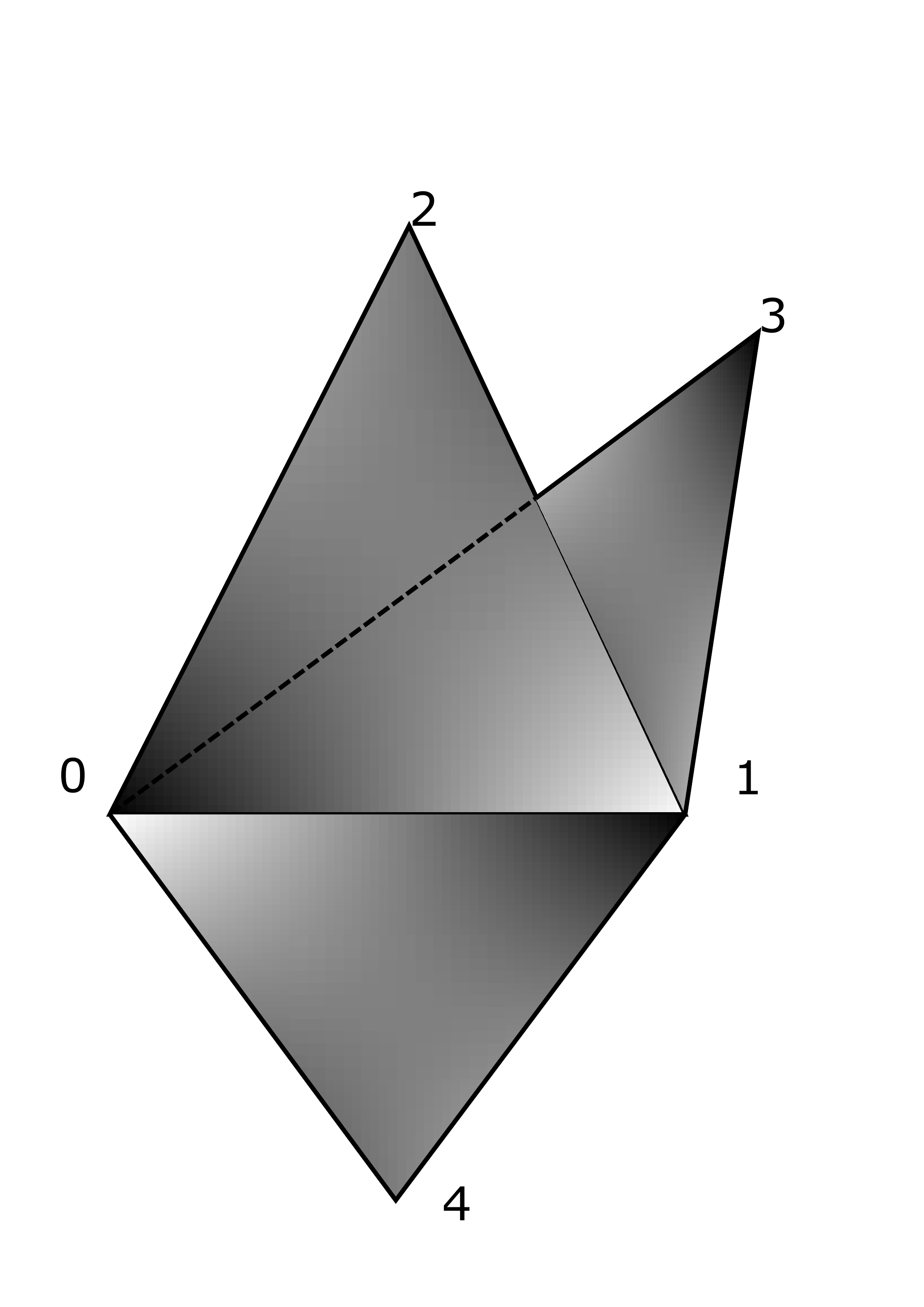}
	\label{fig:x3}
\end{figure}

\section{Word embeddings: an overview}

Here we quickly recap word embeddings by concentrating on two models: Word2Vec (skip-gram version) and GloVe. For an intuitive discussion of word embeddings we refer to \citep{colah}. For an academic introduction into word embeddings (and much more), we refer to \citep{nlp_intro}.


Word embeddings seems to be an umbrella term, which refers to various models that take a corpus of text data as an input, and output a function from the set of words in the corpus to $\mathbb{R}^d$ for some $d \in \mathbb{N}$.\footnote{For the pre-trained models that we will be using, $d$ is set to $300$, which is quite typical.} For the purpose of this work, a more formal framework for the study of word embeddings can be given by the following definitions:

\begin{definition}
	Given a finite alphabet $\Sigma$, the set of all possible words in $\Sigma$ is $\Sigma^* = \setc*{x_1 x_2 \ldots x_n}{x_i \in \Sigma}$.
\end{definition}

For the purpose of our discussion we can assume that $\Sigma$ contains all lowercase and uppercase letters from the English alphabet.

\begin{definition}
	A corpus $C$ is a finite sequence of words $\{w_1, w_2, \ldots, w_T\}$
\end{definition}

\begin{definition}
	Vocabulary $V$ of a corpus $C$ is the set of all words that appear in $C$.
\end{definition}

\begin{definition}
	A word embedding with respect to vocabulary $V$ is a function $E: V \to \mathbb{R}^d$.
\end{definition}

\begin{remark}
	According to our definition, distributional semantics models (DSM) are also word embeddings. In the context of this work, since we treat those models as a black box, there is no difference between DSMs and the more conventional notion of word embeddings. In fact, even if we do not treat them as black boxes, some of the word embedding models and DSMs are mathematically equivalent as shown in \citep{we_and_dsm}, questioning this distinction.
\end{remark}

\subsection{Word2Vec model}

Word2Vec is a collection of similar models that given a corpus produce word embeddings. The models were developed in \citep{word2vec, wor2vec_implementation}. Here we describe a particular variation from \citep{wor2vec_implementation}, which we later use in our experiments.

The idea behind this variation (called the skip-gram model) of Word2Vec is to come up with a word embedding that given a word $w_i$ in a corpus $C$ would predict well for some $n \in \mathbb{N}$ the $n$ words before $w_i$ and the $n$ words after $w_i$ (called context of the word $w_i$). More formally, we can think of this using the following definitions.

Assume we have a corpus $C = \{w_1, w_2, \ldots, w_T\}$, its vocabulary $V$ and a desired dimension of a word embedding we are trying to construct $d$.

\begin{definition}
	Let functions $i: V \to \mathbb{R}^d$ and $o: V \to \mathbb{R}^d$ be called input and output functions respectively.
\end{definition}

\begin{definition}
	Given input and output functions $i$ and $o$ define the following quantity for any $v, w \in V$: $$p(v \vert w) = \frac{\exp(\transpose{o(v)} i(w))}{\sum \limits_{w' \in V}\exp(\transpose{o(w')} i(w))}$$ which will later be interpreted as the probability of observing a word $v$ given that one has observed a word $w$.
\end{definition}

\begin{definition}\label{word2vec_cost}
	Given input and output functions $i$ and $o$ as well as $n \in \mathbb{N}$ define the cost quantity: $$ \text{cost}_n = -\frac{1}{T} \sum \limits_{t=1}^{T} \sum \limits_{-n \leq j \leq n, j \neq 0} \log(p(w_{t+j} \vert w_t))$$
\end{definition}

\begin{definition}
	A skip-gram model, given an input function $i: V \to \mathbb{R}^d$ (usually initialised to some random function) and some $n \in \mathbb{N}$, constructs an output function $o: V \to \mathbb{R}^d$ so that the cost quantity $\text{cost}_n$ is minimised. This output function $o$ is the function that one then uses as a word embedding.
\end{definition}

Here we do not discuss the implementation details of this model but refer anyone interested to \citep{wor2vec_implementation}.



\subsection{GloVe model}

GloVe is a model developed in \citep{glove}. It is known to be a model which is derived (almost) uniquely from the desired properties of a word embedding. We refer to \citep{glove} for full discussion discussion.

To define the model, assume we have a corpus $C = \{w_1, w_2, \ldots, w_T\}$, its vocabulary $V$ and a desired dimension of a word embedding that we are trying to construct $d$.

Like in the skip-gram model case, we will have the parameter $n \in \mathbb{N}$ which will determine how big the context of a word is. In \citep{glove}, when implementing the model $n = 10$ is chosen.

\begin{definition}
	For any $v, w \in V$ let $c(v,w)$ be the number of times the word $w$ appears in the context of $v$ in the corpus. Encoding each word by a natural number, we can put the quantities $c(v,w)$ into a matrix, which is called the word co-occurrence matrix.
\end{definition}

\begin{remark}
	It is possible to introduce some weights to the context quantity $c(-,-)$ described above to reflect the intuition that words in the context of $w$ that are further away from $w$ are less significant.
\end{remark}

\begin{definition}
	Let $v_\text{context}, \, v_\text{word}: V \to \mathbb{R}^d$ be the context-vector and word-vector functions respectively.
\end{definition}

\begin{definition}
	Let $b_\text{context}, \, b_\text{word}: V \to \mathbb{R}$ be the context-bias and word-bias functions respectively.
\end{definition}

\begin{definition}[\citep{glove}]
	A function $f: \mathbb{R} \to \mathbb{R}$ is called a GloVe weighting function iff it satisfies the following properties:
	\begin{itemize}
		\item $f(0) = 0$ and $f$, viewed as a continuous function, satisfies $\lim_{x \to 0}f(x)\log^2(x) = 0$.
		\item $f$ is non-decreasing.
		\item $f(x)$ is ``relatively small for large values of $x$''.\footnote{This property is obviously not mathematically well-defined but it is an intuitive requirement proposed in \citep{glove}}
	\end{itemize}
\end{definition}

\begin{remark}
	In \citep{glove} the following function is used as a GloVe weighting function:
	$$f(x) = \left\{
	\begin{array}{ll}
	(\frac{x}{100})^\frac{3}{4} & \quad x < 100\\
	1 & \quad \text{otherwise}
	\end{array}
	\right.$$
\end{remark}

\begin{definition}
	Given $n \in \mathbb{N}$ and hence the notion of the context, a GloVe weighting function $f$ and functions $v_\text{context}, \, v_\text{word}: V \to \mathbb{R}^d$, and $b_\text{context}, \, b_\text{word}: V \to \mathbb{R}$ the cost quantity of the GloVe model is defined to be: 
	\begin{equation}
	\begin{split}
	\text{cost}_\text{GloVe} = \sum_{w,w'\in V}f(c(w,w'))[\transpose{v_\text{word}(w)} v_\text{context}(w') \\ 
	+ b_\text{word}(w) + b_\text{context}(w') - \log(c(w,w'))]^2
	\end{split}
	\end{equation}
\end{definition}

\begin{definition}
	For fixed $n, \, d \in \mathbb{N}$ and a GloVe weighting function $f$, the GloVe model constructs $v_\text{context}, \, v_\text{word}: V \to \mathbb{R}^d$, and $b_\text{context}, \, b_\text{word}: V \to \mathbb{R}$ such that $\text{cost}_\text{GloVe}$ is minimised. One then uses $v_\text{context} + v_\text{word}$ as a word embedding.
\end{definition}

\begin{remark}
	In fact one could use $v_\text{context}$ or $v_\text{word}$ alone but it was noticed empirically in \citep{glove} that they do not differ much and combining them additively yields slightly better results probably because the model is less prone to overfitting.
\end{remark}

\section{Topology and word embeddings}

Here we discuss the interesting properties that the word embeddings produced by GloVe and skip-gram models have and why topology might be a good framework for studying such embeddings. We will see that word embeddings, even though trained on lexical data with no additional information of semantics and syntax, can learn semantical and syntactical information quite well.

\begin{definition}\label{def:geodesic_distance}
	Let $S^{d-1} := \setc*{x \in \mathbb{R}^d}{\norm{x} = 1}$ be the $(d-1)$-dimensional unit sphere embedded into the Euclidean space $\mathbb{R}^d$, which is equipped with the usual dot product. Define $d_\text{geo}: S^{d-1} \times S^{d-1} \to \mathbb{R}_{\geq 0}$ by setting for all $x, y \in S^{d-1} \; d_\text{geo}(x,y) = \cos^{-1}(x \cdot y)$, where $x \cdot y$ is the usual dot product in $\mathbb{R}^d$. From basic topology we know that $(S^{d-1}, d_\text{geo})$ defines a metric space and we call it the unit sphere with geodesic distance.
\end{definition}

Firstly, let us set the scene for thinking about the word embeddings in the language of topology. Let $f: V \to \mathbb{R}^d$ be a word embedding of a vocabulary $V$.\footnote{The reader can assume $f$ is produced by one of the models we have discussed. However, the following discussion holds for a larger class of models.} Naturally, we want to look at $f(V)$, which is a point cloud in $\mathbb{R}^d$. Empirically we know that various semantical relationships are best captured by looking at the angle between different (linear combinations of) elements of $f(V)$ \citep{mikolov_regularities, levy_regularities, glove}. 

Motivated by this, we will map $f(V)$ into the $(d-1)$-dimensional unit sphere with the geodesic distance by taking each vector in $f(V)$ and dividing it by its norm. We will obtain a new set: $$W = \setc*{\frac{v}{\norm{v}} \in S^{d-1}}{v \neq 0, \, v \in f(V)}$$ Note that it is possible to have two vectors $v,w \in f(V)$ such that $\frac{v}{\norm{v}} = \frac{w}{\norm{w}}$ but we do not observe such situations in practice. We will write $\wordv{word}$ for the vector in $W$ corresponding to the word `word'. 

The interesting properties of both skip-gram and GloVe models were observed on many occasions and a short summary of them can be found in \citep[Subsections 4.1, 4.3]{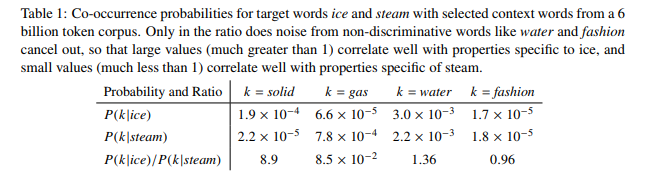}. The most relevant properties for our work is good performance on word analogy and word similarity tasks. A word analogy task is of the form ``a is to b like c is to what?''. A typical example is ``queen is to king like man is to what?'' and the correct answer is `woman'. A task ``a is to b like c is to what?'' can be answered by looking at the vector $\wordv{a} - \wordv{b} + \wordv{c}$ and then finding a vector $\wordv{d} \in W$ which is the closest to $\wordv{a} - \wordv{b} + \wordv{c}$ with respect to the geodesic distance. The word `d' would be returned as the answer to the word analogy task. Depending on corpora that models are trained on and the set of analogies that is being used for testing, word embeddings can achieve accuracy of up to $81.9 \%$ on semantic and $69.3 \%$ on syntactic analogies \citep[Table 2]{glove}.

\begin{figure}
	\caption{Taken from \citep{mikolov_regularities}.}
	\centering
	\includegraphics[width=0.25\textwidth]{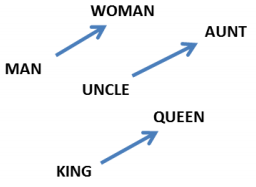}
	\label{fig:gender_vecs}
\end{figure}

Perhaps a more important property of word embeddings from a topological point of view is that they map similar words to vectors that are close with respect to the geodesic distance. This can be tested by taking pairs of words and assigning them with scores, given by a human, on how similar the words in a single pair are, and then seeing how the scores predicted by the model (i.e. the (cosine of) geodesic distance of the pair) differ from the human-produced ones. Again, a high accuracy up to $83.6 \%$ is observed \citep[Table 3]{glove}.

On the basis of the performance on word analogy and similarity tasks, we see that the datasets coming from word embeddings have a potential of exhibiting interesting topology.

\section{Local homology clustering}

Here we present a clustering algorithm based on local homology and review the results on two datasets that we generate from embeddings of a certain set of words with respect to two different embeddings.

In the previous section we have seen that word embedding vectors of similar words end up close to each other when they are mapped into a sphere equipped with the geodesic distance. Hence, we hope that similar words comprise some sort of structure. Continuing this line of thought, we hope that by looking at a local neighbourhood of some word vectors we can distinguish words with vastly different meanings since they would lie at the intersection of two (or more) structures of word vectors of similar words. For example, we might hope that the vector of `bank' would be in the structure of vectors of words related to finance and also in the structure of vectors of words related to rivers since `bank' can mean either a certain financial institution or a river bank. Detecting such words would help to solve the word sense disambiguation problem in NLP. Local homology seems a natural tool and hence it is explored here.

\subsection{Algorithm and pipeline}

In the light of this discussion it seems that recovering canonical stratification of a simplicial complex coming from a dataset is exactly what we are looking for. However, we found that on our data full stratification learning does not seem to provide us with interesting information because the clusters become very discrete (i.e. we observe very few words clustered together). Hence we consider a weaker clustering algorithm. We require a local homology preserving path between vertices of a simplicial complex to cluster them, which is a necessary condition of them belonging to the same canonical stratum of the simplicial complex, as proved in \citep{nanda_strat}. Even with this relaxed condition, we will see that the clusters themselves carry little information. However, the local homology itself seems to be very informative.

Our pipeline is as follows:
\begin{enumerate}
	\item Start with a point cloud $S$ in a metric space $(M,d)$.
	\item Pick $\epsilon$ and build the Vietoris-Rips (VR) complex associated to $S$ and $\epsilon$ -- $\vr{\epsilon}{S}$.
	\item Since each $s \in S$ is a 0-simplex in $\vr{\epsilon}{S}$, the notion of the local homology of $s$ is well-defined, so we compute $H_\bullet^s$ for each $s \in S$.
	\item For each edge (i.e. 1-simplex) $e$ in $\vr{\epsilon}{S}$, we also compute $H_\bullet^e$.
	\item Define a relation on $S$: $s \sim p$ iff $H_n^s \cong H_n^p$ for all $n \in \mathbb{N}$ and there is a path of edges $\{e_1, e_2, \ldots, e_k\}$ between $s$ and $p$ such that for any $n \in \mathbb{N}$ and for any $i,j \in \{1, 2, \ldots, k\}$ we have $H_n^{e_i} \cong H_n^{e_j} \cong H_n^s \cong H_n^p$. Note that $\sim$ defines an equivalence relation.
	\item We output equivalence classes in $S$ with respect to $\sim$ (together with their local homology groups) and call them clusters.
\end{enumerate}

We provide pseudocode for the less trivial parts of the pipeline. From now on assume that $S$ is a point cloud in a metric space $(M, d)$ and we have picked $\epsilon \in \mathbb{R}_{\geq 0}$ and $d$, which will serve as a cut-off point for the VR-complex.

For simplicity, we present the algorithms that calculate local homology over $\zmod{2}$. However, $\zmod{2}$ can be replaced by any other field and the algorithms would still work. In fact, the algorithms can be easily extended to calculate local homology over the ring $\mathbb{Z}$ but we would need to keep track of the torsion and not just the betti numbers as we do now.

The first step of the pipeline is to build the VR-complex. The VR-complex can have higher dimensionality than the ambient space that the data is coming from, so it is usual to restrict the complex and look at its $d$-skeleton. For computation of the VR-complex we use the incremental algorithm as described in \citep[Section 4.2]{fast_VR}.

Let us discuss an algorithm that computes $k$-th local homology with coefficients in $\zmod{2}$. We assume that a simplicial complex that we are dealing with is totally ordered as a set -- arbitrary order is sufficient. Also, in the following algorithm we use a notion of empty matrix. It is a degenerate case of a matrix -- it has 0 rows and 0 columns as well as 0 entries. We also assume to have the following subroutines:
\begin{itemize}
	\item $\Call{SmithNF}$ -- given a matrix $M$ with entries in $\zmod{2}$ it returns the Smith Normal Form of $M$.
	\item $\Call{ZeroCol}$ and $\Call{NZeroCol}$ -- given a matrix $M$, return the number of zero columns in $M$ and the number of non-zero columns in $M$ respectively.
\end{itemize}

\begin{algorithm*}[h]
	\caption{Local Homology}
	\begin{algorithmic}[1]
		\Procedure {Star}{$K$, $\tau$} 
		\Comment{$K$ is a finite simplicial complex, $\tau \in K$}
		\State \Return $\setc*{\sigma \in K}{\tau \subseteq \sigma}$
		\Comment{Inherits ordering from $K$}
		\EndProcedure
		\Statex

		\Procedure {GetBoundaryOperator}{$K$, $k$}
		\State $B_\text{domain} := \setc*{\sigma \in K}{\left\vert{\sigma}\right\vert = k+1} = \{d_1, \ldots, d_n\}$
		\Comment{Ordering inherited from $K$}
		\State $B_\text{codomain} := \setc*{\sigma \in K}{\left\vert{\sigma}\right\vert = k} = \{c_1, \ldots, c_m\}$
		\Comment{Ordering inherited from $K$}
		\If {$B_\text{domain} = \emptyset$}
		\State \Return empty matrix
		\ElsIf {$k=0$}
		\State \Return $1 \times \left\vert{B_\text{domain}}\right\vert$ matrix with all zero entries
		\Else
		\State $M_{i,j} :=  \left\{
		\begin{array}{ll}
		1 & \quad d_j \subseteq c_i\\
		0 & \quad \text{otherwise}
		\end{array}
		\right.$
		\State \Return $[M_{i,j}]_{m \times n}$
		\EndIf
		\EndProcedure
		\Statex

		
		\Procedure {LocalHomology}{$K$, $\tau$, $k$}
		\State $s_\tau := \Call{Star}{K, \tau}$
		\State $\partial_k := \Call{GetBoundaryOperator}{s_\tau, k}$
		\State $\partial_{k+1} := \Call{GetBoundaryOperator}{s_\tau, k+1}$
		\State $\text{dim}(\ker \partial_k) := \Call{ZeroCol}{\Call{SmithNF}{\partial_k}}$
		\State $\text{dim}(\im \partial_{k+1}) := \Call{NZeroCol}{\Call{SmithNF}{\partial_{k+1}}}$
		\State \Return $\text{dim}(\ker \partial_k) - \text{dim}(\im \partial_{k+1})$
		\EndProcedure
	\end{algorithmic}
\end{algorithm*}

\begin{thm}
	Given a finite simplicial complex $K$ and $\tau \in K$, the procedure $\Call{LocalHomology}{K, \tau, k}$ computes $\dim H_k^\tau$.
\end{thm}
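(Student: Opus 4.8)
The plan is to reduce correctness of the procedure to Lemma~\ref{combinatorial_localhom}, in the $\zmod{2}$-coefficient form noted in the remark immediately following it. That lemma identifies $H_\bullet^\tau$ with the homology of the explicit chain complex $(C_\bullet^\tau,\partial)$ in which $C_n^\tau$ is freely generated by the $n$-simplices of the open star $\st(\tau)$ and $\partial_n$ is the relative simplicial boundary (faces that leave the star are sent to $0$). Since the algorithm works over the field $\zmod{2}$, it suffices to show that \textsc{LocalHomology}$(K,\tau,k)$ returns $\dim_{\zmod{2}} H_k(C_\bullet^\tau)$, which by the lemma equals $\dim_{\zmod{2}} H_k^\tau$.

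First I would check that \textsc{Star}$(K,\tau)$ returns exactly $\st(\tau)=\setc*{\sigma\in K}{\tau\subseteq\sigma}$, so that the size-$(n+1)$ members of its output form a basis of $C_n^\tau$. Then I would verify that \textsc{GetBoundaryOperator}$(s_\tau,n)$ is the matrix of $\partial_n\colon C_n^\tau\to C_{n-1}^\tau$ in the inherited ordering: its entry is $1$ precisely when the codomain $(n-1)$-simplex $c_i$ is a codimension-one face of the domain $n$-simplex $d_j$, and over $\zmod{2}$ the signs $(-1)^i$ in the boundary formula of Lemma~\ref{combinatorial_localhom} vanish, so this incidence matrix \emph{is} $\partial_n$. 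The two degenerate outputs must be matched to the chain complex: the empty matrix encodes $\partial_n=0$ when $\st(\tau)$ contains no $n$-simplices, and the all-zero $1\times|B_\text{domain}|$ matrix in the $n=0$ case encodes the (unreduced) convention $\partial_0=0$, correctly yielding $\dim\ker\partial_0=|B_\text{domain}|$.

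Next I would argue that the Smith Normal Form extracts the intended ranks. Over the field $\zmod{2}$ the Smith Normal Form of a matrix is diagonal with entries in $\{0,1\}$, so the number of non-zero columns is the rank; hence $\textsc{NZeroCol}(\textsc{SmithNF}(\partial_{k+1}))=\dim\im\partial_{k+1}$, and by rank--nullity the number of zero columns is the nullity, so $\textsc{ZeroCol}(\textsc{SmithNF}(\partial_k))=\dim\ker\partial_k$. Therefore the procedure returns $\dim\ker\partial_k-\dim\im\partial_{k+1}=\dim\bigl(\ker\partial_k/\im\partial_{k+1}\bigr)=\dim_{\zmod{2}} H_k(C_\bullet^\tau)$, completing the chain of equalities.

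The step I expect to be the main obstacle is justifying that building the boundary matrix \emph{only} over simplices of the star computes the relative boundary of $C_\bullet(\cl{\st(\tau)},\partial\st(\tau))$ rather than the absolute simplicial boundary. The key observation is that $\st(\tau)$ is closed under passing to faces that still contain $\tau$: if $\tau\subseteq\rho$ and $c$ is a face of $\rho$ with $\tau\subseteq c$, then $c\in\st(\tau)$. Consequently the faces of a star-simplex that \textsc{GetBoundaryOperator} omits (because they are indexed outside the star) are exactly those not containing $\tau$, i.e.\ exactly the faces quotiented to $0$ in the relative complex. Making this identification precise---that deleting the rows indexed by non-star faces coincides with the quotient defining relative homology in Definition~\ref{def_localhom}---is what links the linear algebra back to the local homology and closes the argument.
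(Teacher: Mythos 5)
Your proposal is correct and follows essentially the same route as the paper's proof: both reduce correctness to Lemma~\ref{combinatorial_localhom} (over $\zmod{2}$, where homology groups are determined by dimension), identify the matrices produced by \textsc{GetBoundaryOperator} with the boundary operators of that lemma, and use the standard fact that zero and non-zero columns of the Smith Normal Form give $\dim\ker$ and $\dim\im$ respectively. Your closing paragraph about star-faces versus relative boundaries is really a re-justification of Lemma~\ref{combinatorial_localhom} itself, which the paper simply invokes; otherwise your argument is the paper's, carried out with somewhat more care about the degenerate matrix cases.
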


\begin{proof}
	Since we are working over $\zmod{2}$ and with finite simplicial complexes, the local homology groups are just finite-dimensional vector spaces over $\zmod{2}$ and hence are characterised by their dimension.
	
	Here we are just invoking Lemma \ref{combinatorial_localhom}. It is clear that $\partial_\bullet$ in lines 18-19 are the boundary operators from Lemma \ref{combinatorial_localhom}. It is a fact from basic algebra that the number of zero-columns in the Smith Normal Form of a matrix associated with a linear map $f$ is the dimension of $\ker f$ and the number of non-zero columns in the dimension of $\im f$. Hence $\text{dim}(\ker \partial_k)$ and $\text{dim}(\im \partial_{k+1})$ are calculated correctly in lines 20, 21.
	
	By Lemma \ref{combinatorial_localhom}, $H_k^\tau = \ker \partial_k / \im \partial_{k+1}$ and hence we have $\dim H_k^\tau = \text{dim}(\ker \partial_k) - \text{dim}(\im \partial_{k+1})$ as returned by the procedure.
\end{proof}

The final step of the pipeline is to cluster points in $S$ according to the equivalence relation defined at the beginning of this subsection. Assume we have a subroutine $\Call{DFS}{V, E}$ -- given a graph $G = (V, E)$ it returns the connected components of $G$ using the Depth First Search algorithm.

\begin{algorithm}[h]
	\caption{Local Homology Clustering}
	\begin{algorithmic}[1]
		\Procedure {Skeleton}{$K$, $k$}
		\Comment{$K$ - finite simplicial complex, $k \in \mathbb{N}$}
		\State \Return $\setc*{\tau \in K}{\left\vert{\tau}\right\vert \leq k}$
		\EndProcedure
		\Statex
		\Procedure {Cluster}{$S$, $\epsilon$, $d$}
		\State $\mathcal{V} := \Call{IncrementalVR}{S, \epsilon, d}$
		\State $\mathcal{V}_1 := \Call{Skeleton}{\mathcal{V}, 1}$
		\ForAll {$\tau \in \mathcal{V}_1$}
		\ForAll {$0 \leq k \leq d$}
		\State $H_k^\tau = \Call{LocalHomology}{\mathcal{V}, \tau, k}$
		\EndFor
		\EndFor
		\State $V:= \Call{Skeleton}{\mathcal{V}, 0}$
		\State $E:= \setc*{\{v,w\} \in \mathcal{V}_1}{H_\bullet^v \cong H_\bullet^w \cong H_\bullet^{\{v,w\}}}$
		\State \Return $\Call{DFS}{V,E}$
		\EndProcedure
	\end{algorithmic}
\end{algorithm}

\begin{thm}
	$\Call{Cluster}{S, \epsilon, d}$ computes the clustering of $S$ defined at the beginning of the subsection.
\end{thm}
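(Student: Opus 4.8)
The plan is to reduce the statement to a graph-connectivity claim and then match the two descriptions edge by edge. Write $G = (V,E)$ for the graph assembled at the end of the procedure $\Call{Cluster}{S,\epsilon,d}$, where $V := \Call{Skeleton}{\mathcal V, 0}$ is the vertex set and $E$ is the set of $1$-simplices $\{v,w\}$ of $\mathcal V$ with $H_\bullet^v \cong H_\bullet^w \cong H_\bullet^{\{v,w\}}$. Since $\Call{DFS}{V,E}$ returns precisely the connected components of $G$, it suffices to prove that two points $s,p \in S$ lie in the same connected component of $G$ if and only if $s \sim p$, where $\sim$ is the equivalence relation defined at the start of the subsection.

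First I would discharge the correctness-of-ingredients obligations, so that $G$ is genuinely the graph implicit in the definition of $\sim$. By the cited correctness of the incremental Vietoris--Rips construction, $\mathcal V := \Call{IncrementalVR}{S,\epsilon,d}$ equals the $d$-skeleton of $\vr{\epsilon}{S}$; hence $\mathcal V_1$ is its $1$-skeleton and $V$ is exactly $S$ regarded as the set of $0$-simplices. By the preceding theorem, every call $\Call{LocalHomology}{\mathcal V, \tau, k}$ returns $\dim_{\zmod 2} H_k^\tau$, so each comparison $H_\bullet^v \cong H_\bullet^w \cong H_\bullet^{\{v,w\}}$ is decided correctly and $E$ is exactly as intended. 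One point to record is that the inner loop ranges only over $0 \le k \le d$, whereas $\sim$ quantifies over all $n \in \mathbb N$; this is harmless, because $\mathcal V$ is at most $d$-dimensional, so $H_n^\tau = 0$ for $n > d$ and the finite check already witnesses the condition for all $n$.

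Next comes the core equivalence, which I would prove in two directions. For ``connected $\Rightarrow \sim$'', take a path $s = u_0, u_1, \ldots, u_m = p$ in $G$. Each edge $\{u_{i-1}, u_i\} \in E$ forces $H_\bullet^{u_{i-1}} \cong H_\bullet^{u_i} \cong H_\bullet^{\{u_{i-1},u_i\}}$, so by transitivity all the vertices $u_i$ and all the edges $\{u_{i-1}, u_i\}$ share one common local homology. In particular $H_n^s \cong H_n^p$ for every $n$, and the edge sequence $\{u_{i-1}, u_i\}$ is a path of edges between $s$ and $p$ satisfying the homology condition in the definition of $\sim$; hence $s \sim p$.

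The converse, and the main obstacle, is to realise a witnessing edge-path for $s \sim p$ as a path in $G$. Given edges $e_1, \ldots, e_k$ between $s$ and $p$ with $H_\bullet^{e_i} \cong H_\bullet^s$ for all $i$, membership $e_i \in E$ additionally requires the two endpoints of $e_i$ to share the local homology of $e_i$. The definition of $\sim$, however, constrains only the edges and the two terminal vertices $s,p$, and is silent about the intermediate vertices of the walk. Thus the whole converse hinges on the claim that every vertex lying on a local-homology-preserving edge-path itself carries the common local homology. I would therefore isolate this as the crux: establish that, under the intended reading of the definition, a path of edges between $s$ and $p$ is in fact a path in $G$ (equivalently, strengthen the stated relation so that the path vertices are included). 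Once this is settled, each $e_i \in E$, the edges $e_1, \ldots, e_k$ form a path from $s$ to $p$ in $G$, the two points land in the same connected component, and the equivalence --- hence the theorem --- follows.
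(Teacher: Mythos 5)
Your proposal follows the same overall route as the paper's proof---reduce the theorem to connectivity of the graph $G=(V,E)$ assembled in \textsc{Cluster} and invoke the fact that DFS returns its connected components---but it is considerably more careful, and the crux you isolate is a genuine defect that the paper's proof silently glosses over. The paper's entire argument is that after line 14 every surviving edge preserves local homology, hence every path in $G$ is local homology preserving, ``and so by definition of the equivalence relation'' the components are the equivalence classes. That establishes only your easy direction (connected in $G$ implies $s \sim p$). The completeness direction is exactly where you stop: the stated relation $\sim$ constrains the edges $e_1,\ldots,e_k$ and the two terminal vertices $s,p$, but says nothing about the intermediate vertices of the walk, whereas membership of $e_i=\{u,v\}$ in $E$ additionally demands $H_\bullet^u \cong H_\bullet^v \cong H_\bullet^{e_i}$. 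Your remark that the check $0 \le k \le d$ suffices (since $\mathcal{V}$ is at most $d$-dimensional, so all higher local homology vanishes) is also correct and is not addressed in the paper.

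The mismatch you flag is real and not merely pedantic: under the literal reading of $\sim$ the theorem is false. In a $1$-dimensional complex every edge $e$ has $H_1^e \cong \zmod{2}$ and all other local homology trivial, while by Lemma \ref{lochom_as_deg} a vertex of degree $\delta$ has $H_1 \cong (\zmod{2})^{\delta-1}$. So take a path $s$--$m$--$p$ with edges $e_1,e_2$, attach one extra leaf at each of $s$ and $p$ and two extra leaves at $m$: then $H_\bullet^s \cong H_\bullet^p \cong H_\bullet^{e_1} \cong H_\bullet^{e_2}$, but $H_\bullet^m$ has first betti number $3$. This tree is realisable as a Vietoris--Rips complex at a fixed scale (it is triangle-free, so the clique complex is the tree itself). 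Here $s \sim p$ literally holds, yet neither $e_1$ nor $e_2$ lies in $E$ and $s,p$ fall in different components of $G$. Your proposed repair---reading (or restating) $\sim$ so that the intermediate vertices of the witnessing path must also carry the common local homology, consistent with the local homology preserving paths of the stratification literature the paper cites---is evidently the intended meaning, and with it both of your directions go through and the theorem holds. In short: your proof is the paper's proof done right, and the ``obstacle'' you name is an actual gap in the paper's definition--theorem pair rather than in your argument.
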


\begin{proof}
	Since in line 14 we are removing all the edges that do not preserve the local homology between their endpoints, what we are left with are the edges that do preserve local homology. Therefore all paths in such a graph will be local homology preserving and so by definition of the equivalence relation, connected components of this graph will be exactly the required equivalence classes.
\end{proof}

\subsection{Implementation}

We implement this algorithm in Python 3.6. Coefficients in $\zmod{2}$ are used to calculate the local homology groups. We implement everything from scratch except the $\Call{SNF}$ subroutine, which we take from \citep{SNF} and adapt to our needs by making the implementation iterative rather than recursive. We use a popular linear algebra library called \textit{numpy} to do the matrix calculations and the built-in Python library \textit{multiprocessing} to implement distribution across multiple processors. The code can be found in a GitHub repository \citep{code}.

\section{Results on data}

We run the local homology clustering algorithm on datasets obtained by taking an embedding of 155 words that are either related to the topic of water and rivers or to the topic of finance. There are a few words that are not related to any of the two topics. The reason for this choice is the hope to see interesting local structure near the word `bank', which is a homonym, related to both rivers and finance.

We use two pre-trained word embeddings:
\begin{enumerate}
	\item Embedding obtained by training the skip-gram model on ``part of Google News dataset (about 100 billion words). The model contains 300-dimensional vectors for 3 million words and phrases'' \citep{pretrained_word2vec}.
	\item Embedding obtained by training the GloVe model on web crawl data obtained from Common Crawl. The result is 300-dimensional word embedding \citep{pretrained_glove} and we use the spaCy library in Python to access it.
\end{enumerate}

By taking the word vectors of the 155 words with respect to the first and the second embeddings, and regarding them as lying on the unit sphere with geodesic distance (Definition \ref{def:geodesic_distance}), we get respective point clouds $D_\text{skip-gram}$ and $D_\text{GloVe}$, which are used as an input to the pipeline.

Also, we note that we have performed calculations without mapping the word vectors to the unit sphere and using the Euclidean distance. However, no interesting results have been found; almost all datapoints end up having all homology groups being 0 or that of a point. As we will see, this is not the case when using the geodesic distance (Definition \ref{def:geodesic_distance}) and so this again suggests that the geodesic distance induces more interesting topological properties than the Euclidean distance.

Even though the two datasets exhibit different local structure and local homologies of vectors corresponding to the same words are very different, we see similar structure of clusters obtained:
\begin{itemize}
	\item For higher values of $\epsilon$ there is a giant cluster with local homology all being zero, which we can think of as the boundary of the dataset. All the other clusters mostly have only one point in them. Very rarely we see 2 or 3 points clustered together.
	\item For lower values of $\epsilon$, 2 or 3 point clusters are even more rare, there is no giant cluster, almost all clusters have only one point in them.
\end{itemize}
The reason for such clustering results is possibly not the similarity between the datasets but the fact that the requirement of local homology groups to be isomorphic is very limiting and in practice we observe that local homology groups are vastly different. Therefore, it is rare to see interesting clusters. This can be changed by relaxing the isomorphism requirement by looking at persistent local homology instead. This would take out the $\epsilon$ parameter but would introduce another parameter $d$ for the distance between two barcodes in the barcode space. Then we could say that persistent local homology groups are ``isomorphic'' (i.e. sufficiently similar to be clustered together)  iff at each homological degree the distance between the barcodes is less than $d$. However, this is left as a part of the future work.

Because the clusters do not have interesting structure, they are not discussed further. Now we will explore the local homology groups of different datapoints.

\subsection{Results on $D_\text{skip-gram}$}

On this dataset we have performed experiments for all $\epsilon \in \{40^\circ, 41^\circ, \ldots, 81^\circ\}$.

Note that since we are calculating homology over $\zmod{2}$, each homology group is completely characterised by its betti number. Hence, for simplicity purposes we denote (local) homology groups as a list of natural numbers. Writing $a_0 \, a_1 \, \ldots \, a_n$ for $a_i \in \mathbb{N}$ means that $H_i = (\zmod{2})^{a_i}$ for $i \in \{0, 1, \ldots, n\}$ and $H_i = 0$ for $i > n$. Table \ref{word2vec_localhom} contains a selection of words, of which their word vectors have local homology that we find interesting. 

\begin{table}[h]
	\centering
	\caption{Words with interesting homology ($D_\text{skip-gram}$)}
	\begin{tabular}{||c c c||} 
		\hline
		Word & Value of $\epsilon$ & Local homology \\ [0.5ex] 
		\hline\hline
		bank &	80 &	0 0 1 5 \\
		bank &	79 &	0 1 1 2 \\
		bank&	78&	0 1 1 \\
		bank&	75&	0 1 3 \\
		bank&	74&	0 1 2 \\
		bank&	66&	0 5 \\
		corporation&	81&	0 0 0 4 4 \\ 
		corporation	&80&	0 0 2 6 \\
		corporation	&79&	0 0 5 4 \\
		corporation	&78&	0 0 5\\
		corporation	&77&	0 0 10\\
		corporation	&76&	0 0 7\\
		corporation	&75&	0 2 1\\
		invest&	81&	0 0 0 7\\
		invest&	80&	0 0 1 5\\
		invest&	79&	0 0 5\\
		invest&	78&	0 0 6\\
		invest&	74&	0 2 1\\
		invest&	73&	0 2 1\\
		manufacturing&	81&	0 0 0 4 1\\
		manufacturing&	80&	0 0 1 3\\
		market&	80&	0 1 0 2\\
		market&	79&	0 1 2 1\\
		market&	77&	0 1 1\\
		market&	76&	0 1 1\\
		savings&	80&	0 0 3 1\\
		savings&	77&	0 1 1\\
		savings&	75&	0 1 1\\
		transaction&	81&	0 2 2 1\\
		transaction&	79&	0 1 4\\
		transaction&	78&	0 1 4\\[1ex] 
		\hline
	\end{tabular}
	\label{word2vec_localhom}
\end{table}

Here we define our notation. We will write $S$ for the point cloud $D_\text{skip-gram}$, $\vr{\epsilon}{S}$ for the VR-complex with parameter $\epsilon$ of the point cloud $S$ and $G_\epsilon(S)$ for the 1-skeleton of $\vr{\epsilon}{S}$. Also, we say that $x, y \in S$ are $\epsilon$-neighbours iff they are neighbours in the graph $G_\epsilon(S)$. Let $N_\epsilon(x)$ be the set of $\epsilon$-neighbours of $x \in S$.

\subsubsection{Understanding 1st local homology}

We saw earlier in Lemma \ref{lem:local_hom_link} that the 1st local homology group of a vertex $\{x\} \in \vr{\epsilon}{S}$ indicates the number of connected components in $\lk(\{x\})$. In practice this usually happens when there is some $y \in N_\epsilon(x)$ such that for all $y \neq z \in N_\epsilon(x)$ we have that $y \notin N_\epsilon(z)$. Hence we usually observe a big connected component of the link and then a few ``lone'' vertices, the existence of which is detected by $H_1^{\{x\}}$. However, there are rare occasions where there is more than one connected component that is not just a point.

Also, we notice that many word vectors have a range of values of $\epsilon$ for which the 1st local homology changes due to new points coming into the link that form ``lone'' connected components for a short range of $\epsilon$-values before they connect to the main component.

The local structure around $\wordv{bank}$ is a good example to look at in order to understand the 1st local homology.

For $\epsilon < 56^\circ$ the vector $\wordv{bank}$ has the local homology of a point since its link is empty. For $\epsilon \in \{56^\circ, \ldots, 65^\circ\}$ the only thing that changes is the 1st local homology, reflecting the fact that we have more word vectors coming into the link, which have no connections with each other. At $\epsilon = 66^\circ$ the 1st local homology is $(\zmod{2})^5$ and the 6 points in the link are vectors of the following words: `deposit', `branch', `institution', `treasury', `depository', `thrift', as seen in Figure \ref{fig:bank_66_word2vec}.

\begin{figure}[h]
	\caption{Link at $\epsilon = 66^\circ$ of $\wordv{bank}$; distance is the geodesic distance to $\wordv{bank}$.}
	\centering
	\includegraphics[width=0.45\textwidth, height=4cm]{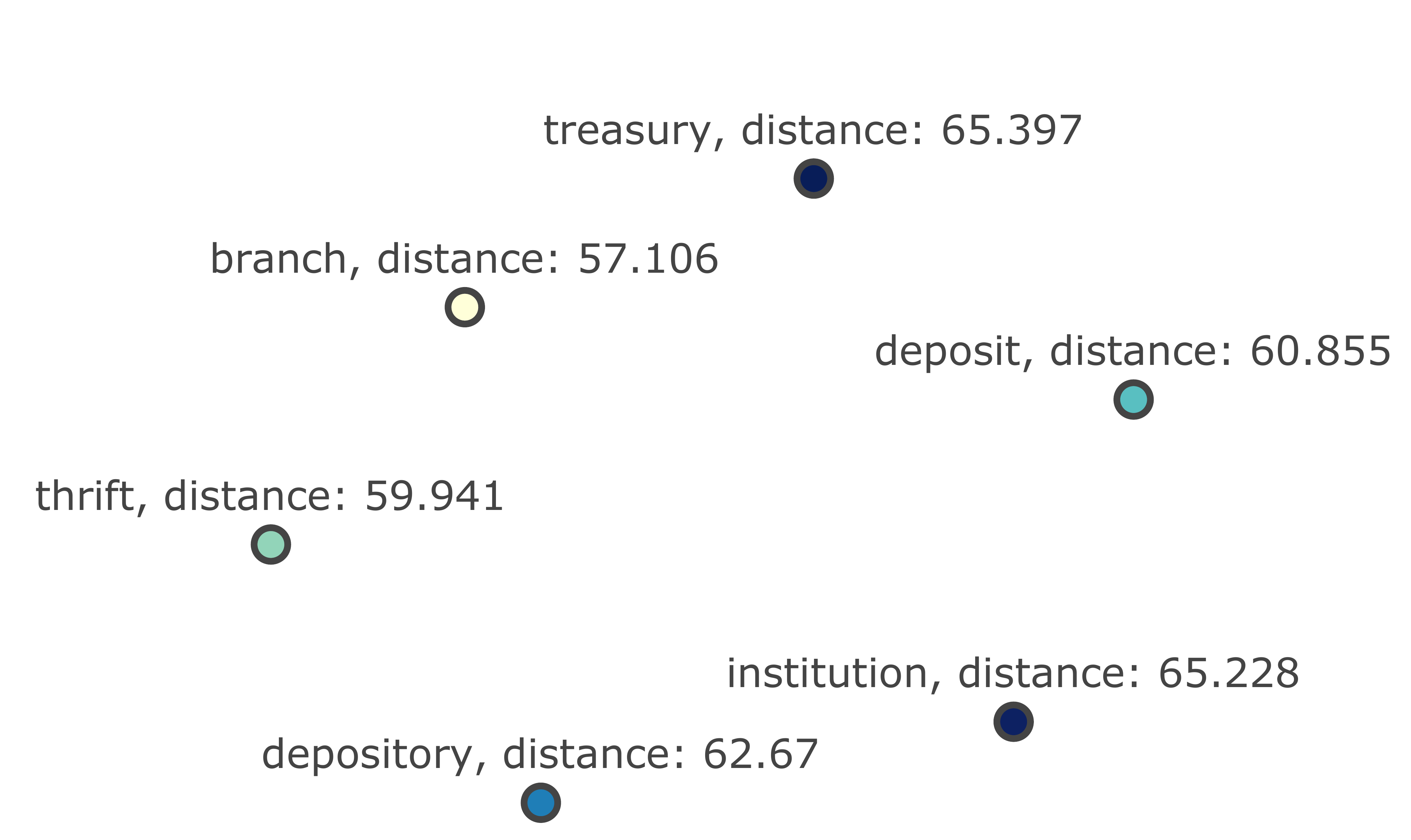}
	\label{fig:bank_66_word2vec}
\end{figure}

When $\epsilon \in \{67^\circ,68^\circ\}$ the 1st local betti number of $\wordv{bank}$ gets smaller reflecting the fact that the 6 words in the link start having connections between each other. At $\epsilon = 69^\circ$ we have another word vector (`supermarket') coming into the link with no connections. At $\epsilon \in \{71^\circ, 72^\circ\}$ the 1st local homology being $\zmod{2}$ reflects the fact that $\wordv{thrift}$ has no connections to any other point in the link but is in the link itself. At $\epsilon \in \{73^\circ, 74^\circ, 75^\circ\}$ the 1st local homology being $\zmod{2}$ reflects the fact that a new word vector (‘syndicate’) becomes part of the link with no connections to the biggest connected component.

When $\epsilon \in \{78^\circ, 79^\circ\}$ the 1st local homology being $\zmod{2}$ reflects the fact that a new connected component -- a 3-clique (‘river’, ‘levee’, ‘shore’) is formed in the link of $\wordv{bank}$, as can be seen in Figure \ref{fig:bank_78_word2vec_cc}. This is one of the rare cases when the smaller connected component of the link is not just a point. Also, the interesting thing is that the big connected component is composed of finance related words but the small (`river', `levee', `shore') is composed of nature related words, confirming our intuition that the fact that `bank' has two unrelated meanings should be reflected by the local homology.

\begin{figure}[h]
	\caption{Link at $\epsilon = 78^\circ$ of $\wordv{bank}$; distance is the geodesic distance to $\wordv{bank}$.}
	\centering
	\includegraphics[width=0.45\textwidth]{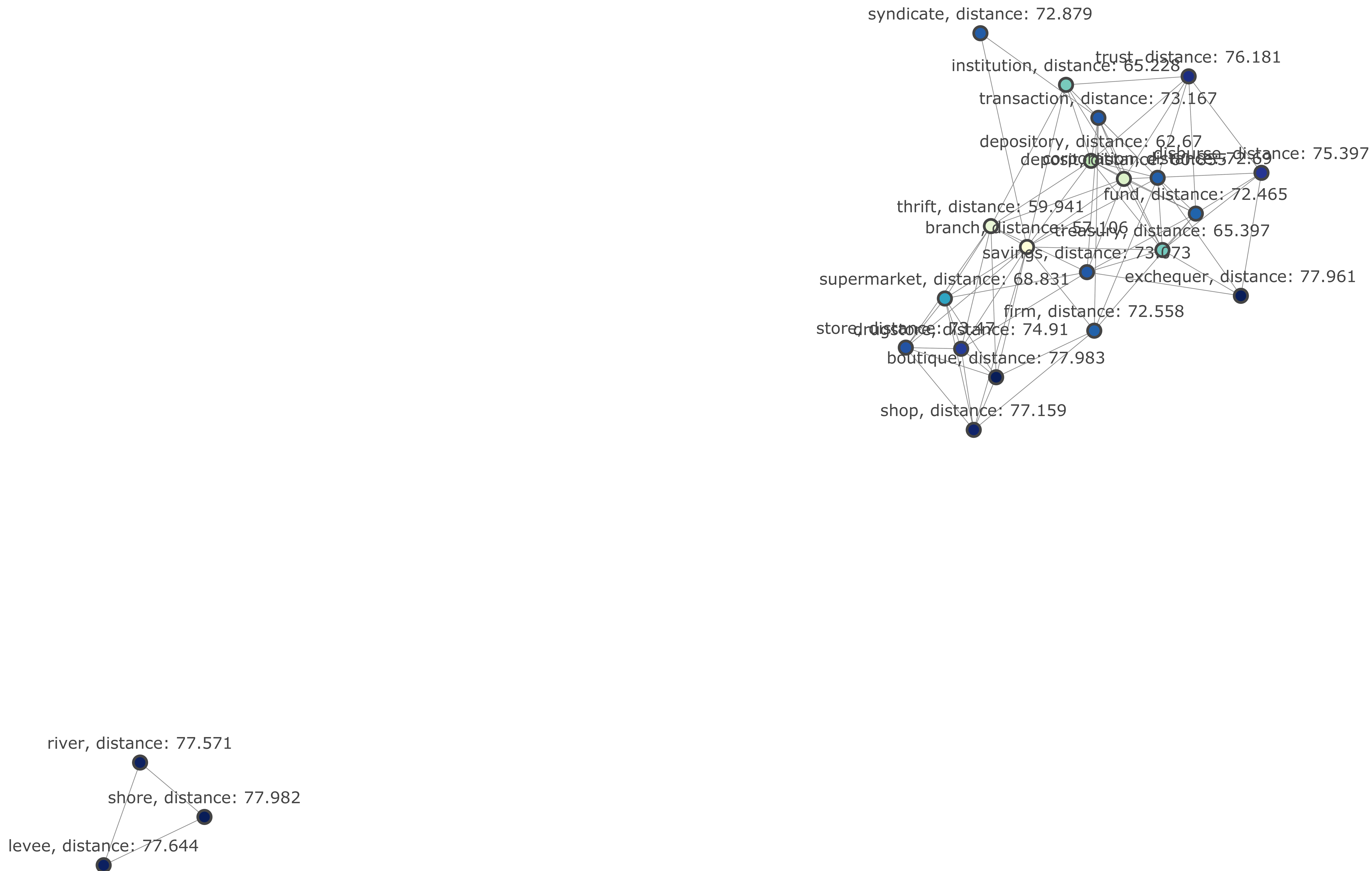}
	\label{fig:bank_78_word2vec_cc}
\end{figure}

\subsubsection{Understanding 2nd local homology}

From Lemma \ref{lem:local_hom_link}, we know that the 2nd local homology of a vertex is isomorphic to the 1st homology of its link. Hence we can say that it counts cycles in the link which do not contain cliques. The reason cycles cannot contain cliques is because $\vr{\epsilon}{S}$ is a clique complex of $G_\epsilon(S)$, and since simplices are contractible, they do not contribute to homology. Intuitively speaking, such cycles in the link appear when there are 4 (or more) words, say $v_1, \, v_2, \, v_3, \, v_4$, such that:
\begin{itemize}
	\item $v_1$ and $v_2$ as well as $v_2$ and $v_3$ are used together in the corpus enough to form a connection at \textbf{that particular $\epsilon$ scale}, but not $v_1$ and $v_3$.
	\item $v_1$ and $v_4$ as well as $v_4$ and $v_3$ are used together in the corpus enough to form a connection at \textbf{that particular $\epsilon$ scale}, but not $v_1$ and $v_3$.
\end{itemize}
So $v_1, \, v_2, \, v_3, \, v_4$ form a ``rectangle'' in the graph. An intuitive example where we might see something like this is with words `reservoir', `reserve' and `stash'. $\wordv{reservoir}$ and $\wordv{reserve}$ are close as well as $\wordv{reserve}$ and $\wordv{stash}$ but not $\wordv{reservoir}$ and $\wordv{stash}$. Semantically, it makes sense because we stash things to reserve them and we use reservoirs to reserve water but we do not really stash water in reservoirs.

The local structures around $\wordv{bank}$ and $\wordv{corporation}$ provide some insight into the 2nd local homology, so let us look into them now.

At $\epsilon = 74^\circ$ $\wordv{bank}$ has the 2nd local homology $(\zmod{2})^2$, which reflects the existence of cycles composed of vectors of the following words: 
\begin{itemize}
	\item `depository', `treasury', `savings', `thrift', `institution'.
	\item `depository', `treasury', `fund', `corporation', `institution'.
\end{itemize}
This can be noted in Figure \ref{fig:bank_74_word2vec}.

\begin{figure}[h]
	\caption{A part of the link of $\wordv{bank}$ at $\epsilon = 74^\circ$; distance is the geodesic distance to $\wordv{bank}$.}
	\includegraphics[width=0.45\textwidth]{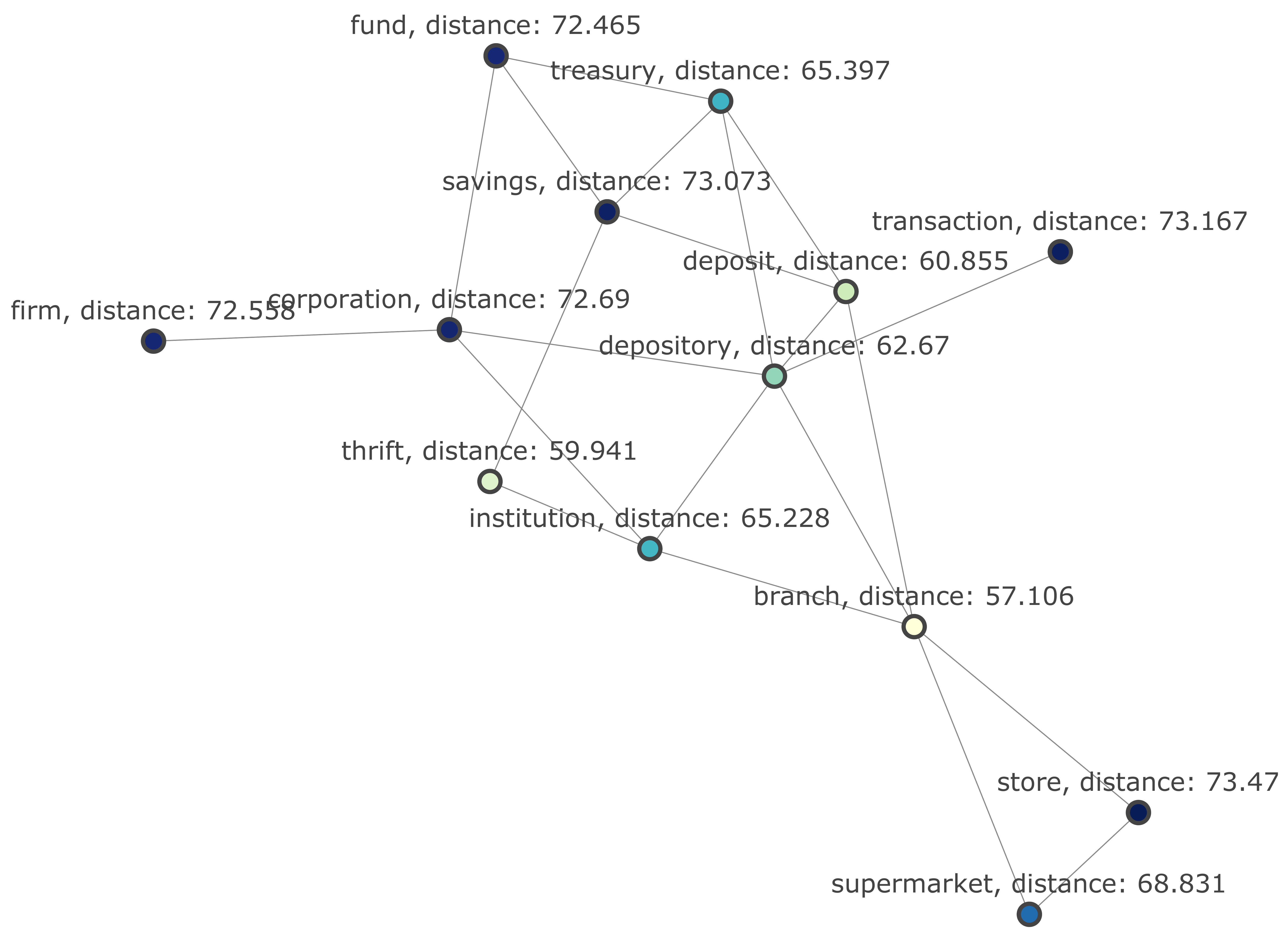}
	\label{fig:bank_74_word2vec}
\end{figure}

At $\epsilon = 75^\circ$ the 2nd local homology of $\wordv{bank}$ is $(\zmod{2})^3$, generated by the following cycles: 
\begin{itemize}
	\item `depository', `treasury', `fund', `corporation'.
	\item `thrift', `branch', `store', `drugstore'.
	\item `depository', `treasury', `savings', `thrift', `institution'.
\end{itemize}
Of course, those representatives of cycles are not unique. In fact, at this scale the cycle `depository', `treasury', `savings', `thrift', `institution' and `depository', `deposit', `savings', `thrift', `institution' represent the same homology class because of the 3-cliques `depository', `deposit', `treasury' and `deposit', `treasury', `savings'. All of this can be seen in Figure \ref{fig:bank_75_word2vec}.

\begin{figure}[H]
	\caption{A part of the link of $\wordv{bank}$ at $\epsilon = 75^\circ$; distance is the geodesic distance to $\wordv{bank}$.}
	\centering
	\includegraphics[width=0.45\textwidth]{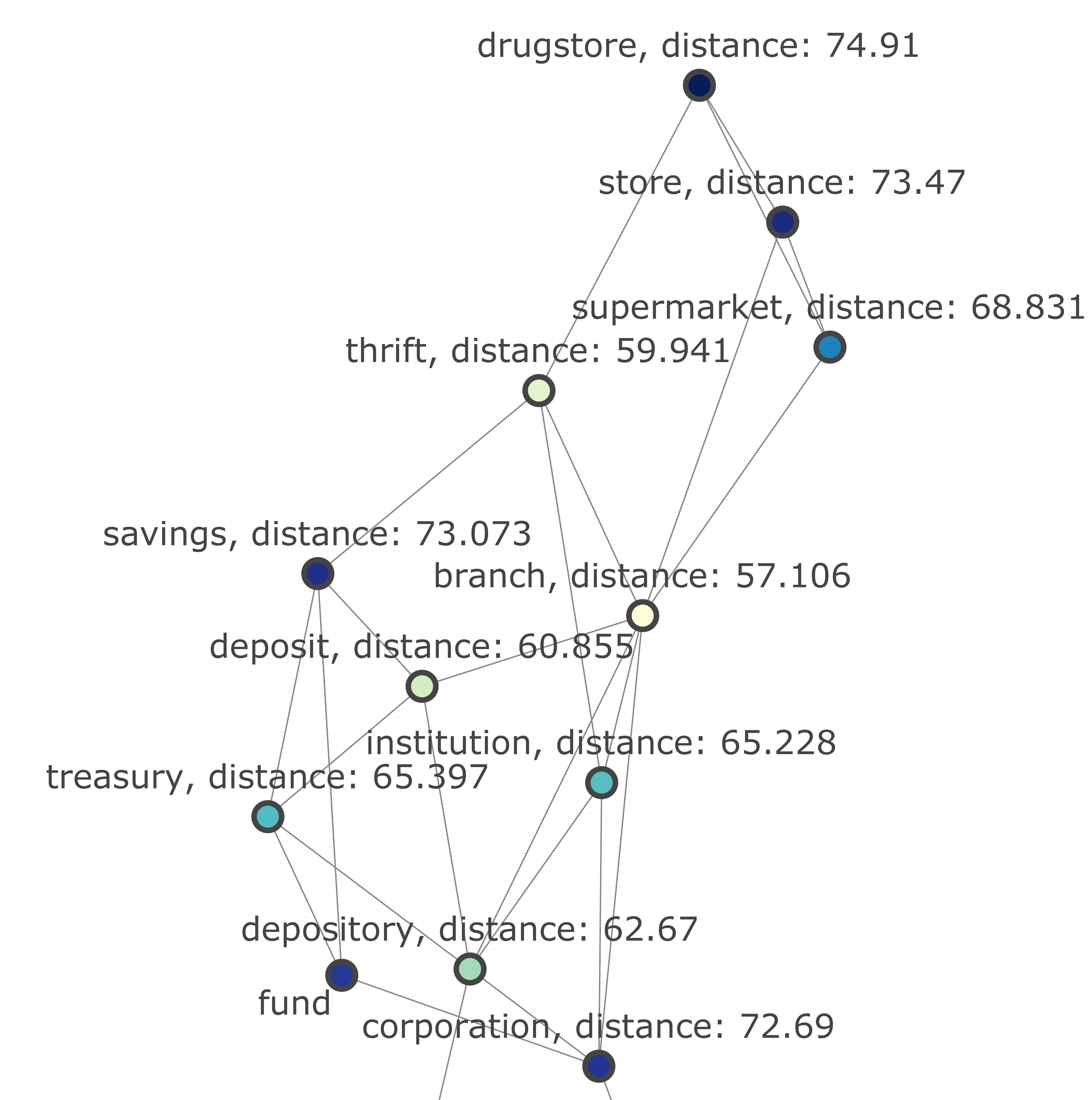}
	\label{fig:bank_75_word2vec}
\end{figure}

At $\epsilon \in \{78^\circ, 79^\circ\}$ we have the 2nd local homology of $\wordv{bank}$ being $\zmod{2}$, which reflects the existence of the cycle `transaction', `syndicate', `branch', `deposit'. We saw the word vector of `syndicate' earlier as a ``lone'' connected component in the link and now, still being relatively far away from a lot of points, at this scale it connects only to a couple points and hence creates a cycle which we capture by looking at the 2nd local homology. This can be noted in Figure \ref{fig:bank_78_word2vec}.

\begin{figure}[H]
	\caption{A part of the link of $\wordv{bank}$ at $\epsilon = 78^\circ$; distance is the geodesic distance to $\wordv{bank}$.}
	\includegraphics[width=0.45\textwidth]{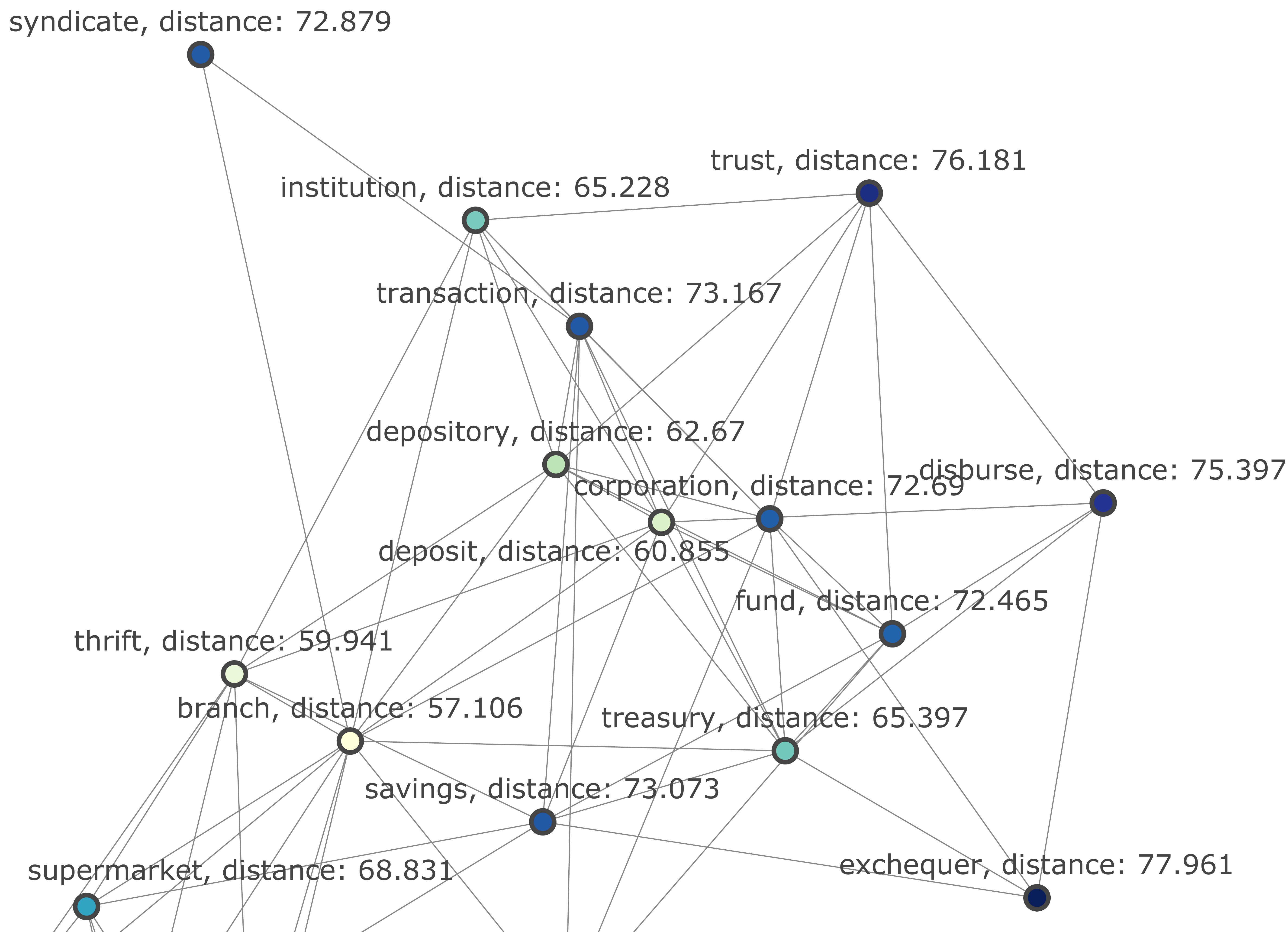}
	\label{fig:bank_78_word2vec}
\end{figure}



Now let us take a look at the vector $\wordv{corporation}$ and its local structure. We will concentrate on the homology when $\epsilon \in \{75^\circ, 76^\circ, 77^\circ\}$.

When $\epsilon = 75^\circ$, the 2nd local homology $\zmod{2}$ generated by the cycle of vectors of `fund', `venture', `firm', `bank', as can be seen in Figure \ref{fig:corporation_75_word2vec}.

\begin{figure}[h]
	\caption{A part of the link of $\wordv{corporation}$ at $\epsilon = 75^\circ$; distance is the geodesic distance to $\wordv{corporation}$.}
	\includegraphics[width=0.45\textwidth]{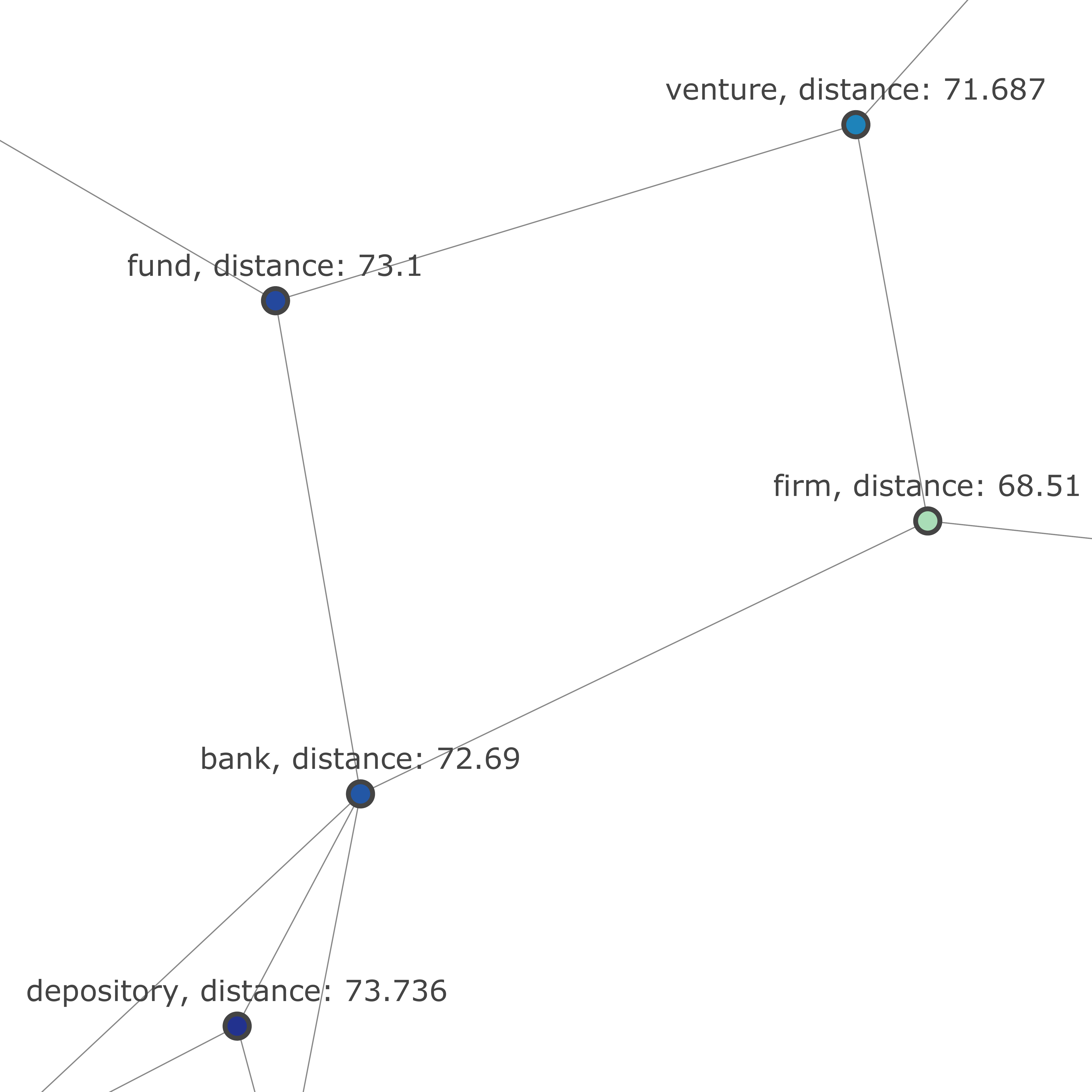}
	\label{fig:corporation_75_word2vec}
\end{figure}

When $\epsilon = 76^\circ$ we suddenly see the 2nd local homology being $(\zmod{2})^7$. By looking at the link we see that the earlier mentioned cycle persists and 6 more cycles appear:
\begin{itemize}
	\item `depot', `branch', `bank', `monopoly'.
	\item `bank', `transaction', `partnership', `organization', `institution'.
	\item `institution', `depository', `transaction', `partnership', `organization'.
	\item `affairs', `institution', `bank', `treasury'.
	\item `branch', `organization', `fund', `bank'.
	\item `fund', `organization', `partnership', `monopoly', `trust'.
\end{itemize}
The cycles can be seen in Figure \ref{fig:corporation_76_word2vec}.

\begin{figure*}
	\caption{The link of $\wordv{corporation}$ at $\epsilon = 76^\circ$; distance is the geodesic distance to $\wordv{corporation}$.}
	\includegraphics[width=\textwidth, height=9cm]{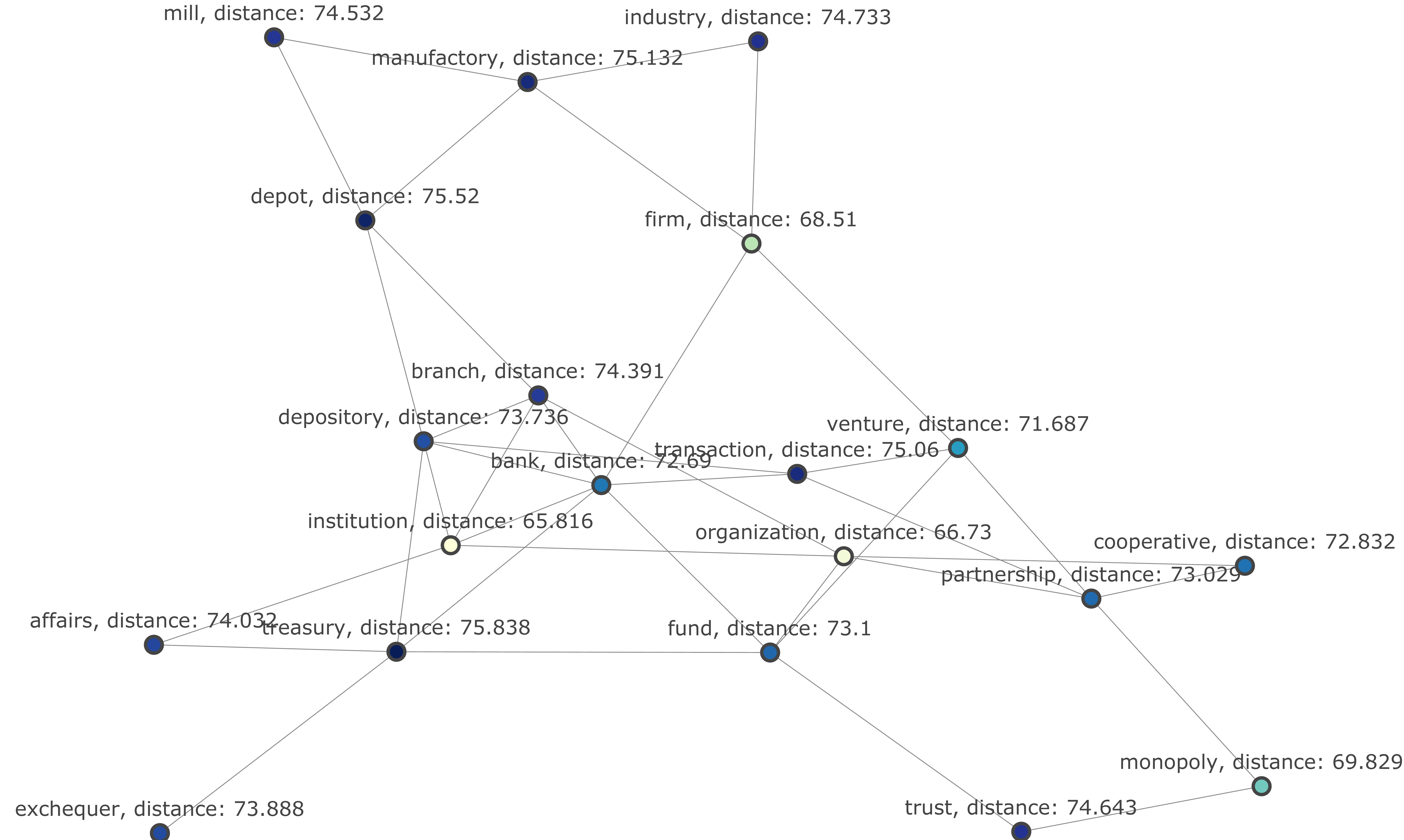}
	\label{fig:corporation_76_word2vec}
\end{figure*}

At $\epsilon = 77^\circ$ the one cycle (`fund', `rganization', `partnership', `monopoly', `trust') gets filled in and hence dies but 4 more cycles come in and we have 10 cycles in total that give rise to the 2nd local homology $(\zmod{2})^{10}$. The 4 new cycles are:
\begin{itemize}
	\item `fund', `trust', `partnership', `organization'.
	\item `manufactory', `establishment', `branch', `depot'.
	\item `bank', `institution', `establishment', `group', `firm'.
	\item `monopoly', `establishment', `institution', `trust'.
\end{itemize}


Afterwards the cycles start dying out and at $\epsilon = 81^\circ$ we observe the 2nd local homology of $\wordv{corporation}$ being trivial.

Higher local homology groups of a vertex correspond to higher homology groups of the link but are harder to interpret intuitively. Of course, the same principle of creating (higher order) cycles works.

\subsection{Results on $D_\text{GloVe}$}

On this dataset we have performed experiments for all $\epsilon \in \{40^\circ, 41^\circ, \ldots, 74^\circ\}$.

In the same way as in the previous subsection, we present a table of words with interesting local homology (Table \ref{glove_localhom}) and then look closer at a few words.

\begin{table}[h]
	\centering
	\caption{Words with interesting homology (GloVe)}
	\begin{tabular}[c]{||c c c||}
		\hline
		Word & Value of $\epsilon$ & Local homology \\ [0.5ex] 
		\hline\hline
		bank&	74&	0 0 1 1 \\
		bank&	68&	0 2 2\\
		bank&	66&	0 1 4\\
		bank&	63&	0 2 1\\
		bank&	62&	0 2 1\\
		corporation&	64&	0 1 2\\
		factory&	70&	0 1 1\\
		fund&	68&	0 1 0 1\\
		house&	68&	0 1 1\\
		invest&	73&	0 0 2 1\\
		invest&	72&	0 0 2 1\\
		invest&	68&	0 1 3 1\\
		river&	68&	0 1 1\\
		river&	66&	0 2 1\\
		stock&	74&	0 1 1\\
		trade&	74&	0 0 1 3\\
		trade&	73&	0 0 1 1\\
		trade&	72&	0 0 1 1\\
		transaction&	68&	0 1 1\\
		waste&	72&	0 1 3\\
		waste&	71&	0 1 2\\
		waste&	68&	0 1 3\\[1ex] 
		\hline
	\end{tabular}
	\label{glove_localhom}
\end{table}

Regarding notation, the same conventions from the last part hold but we will write $S$ for $D_\text{GloVe}$ and \textbf{not} $D_\text{skip-gram}$.

\subsubsection{Understanding 1st local homology}

The intuition behind the 1st local homology from the previous dataset still holds here. Without repeating ourselves we dive straight into a few examples.

Looking at $\wordv{bank}$, we see that in general we observe lower 1st local betti numbers compared to the previous dataset. In the previous dataset we first observed 6 ``lone'' connected components in the link before the vertices started connecting to each other but here it happens for lower $\epsilon$ values. Here, as seen in Figure \ref{fig:bank_58_glove}, we observe a connected component in the link that is not a ``lone'' vertex for quite a low $\epsilon = 58^\circ$, and before that we had only 2 ``lone'' vertices in the link, rather than 6. However, we still observe this phase of ``lone'' vertices coming in, generating 1st local homology for a short period of time and then connecting to the main component. From our observations this generalises to other word vectors as well, not just $\wordv{bank}$. This phase usually encompasses a mid-range of $\epsilon$-values. 

\begin{figure}[h]
	\caption{The link of $\wordv{bank}$ at $\epsilon = 58^\circ$; distance is the geodesic distance to $\wordv{bank}$.}
	\centering
	\includegraphics[width=4cm]{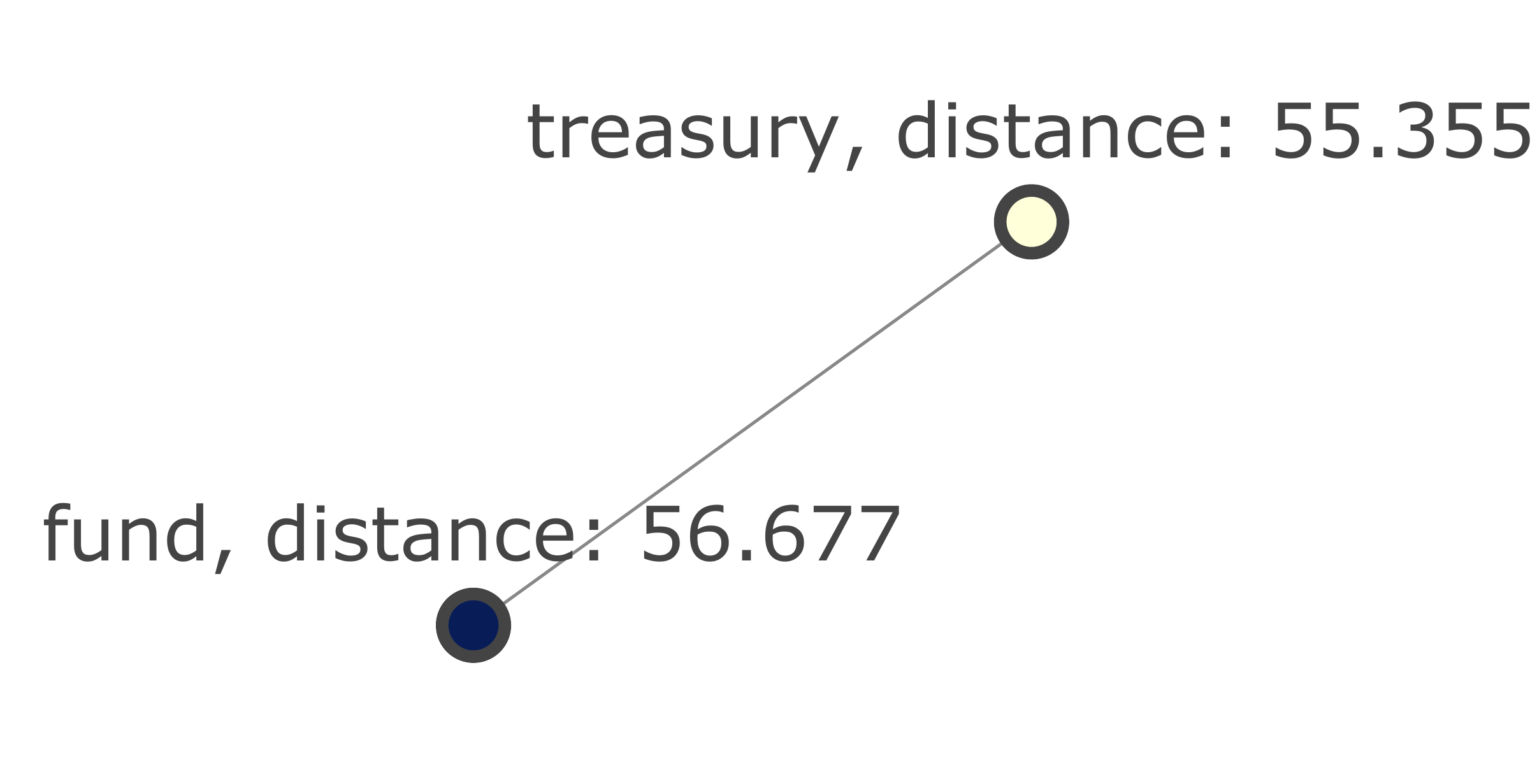}
	\label{fig:bank_58_glove}
\end{figure}

A more interesting observation is that at $\epsilon = 66^\circ$ the link of $\wordv{bank}$ has a connected component consisting of a vector corresponding to `river', which is reflected in the 1st local homology being $\zmod{2}$. This river related connected component persists longer than in the other dataset's case -- until $\epsilon = 70^\circ$ at which point the river related component connects to the main (finance related) one via the vector of `house'. The link of $\wordv{bank}$ before the two connected components merge can be seen in Figure \ref{fig:bank_69_glove}. Similarly to the previous dataset, we see the fact that `bank' has two very different meanings is reflected in its local homology. From a persistence point of view, it is even more pronounced in this dataset.

\begin{figure}[h]
	\caption{The link of $\wordv{bank}$ at $\epsilon = 69^\circ$; distance is the geodesic distance to $\wordv{bank}$.}
	\includegraphics[width=0.45\textwidth]{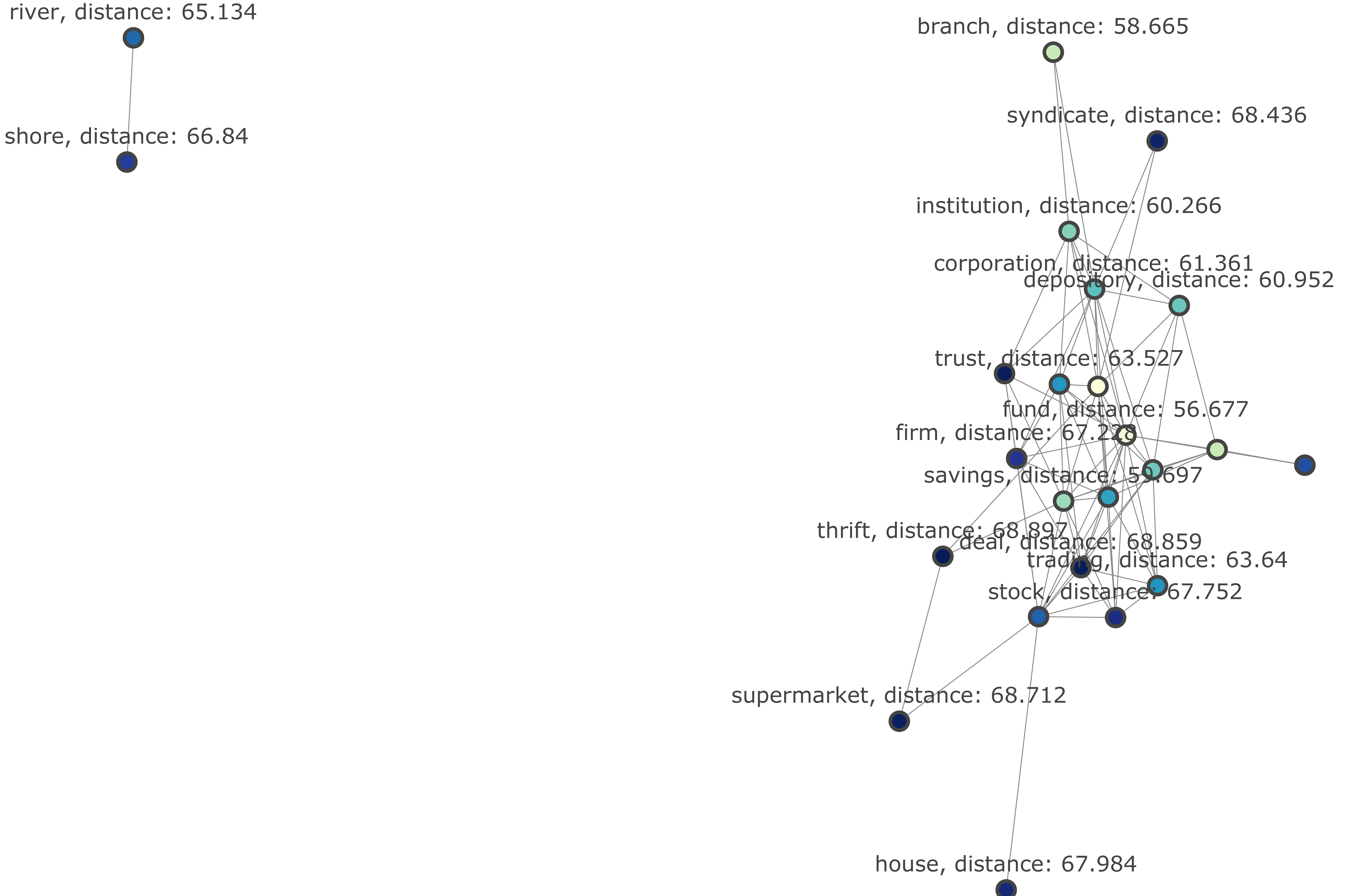}
	\label{fig:bank_69_glove}
\end{figure}

\subsubsection{Understanding 2nd local homology}

The intuition behind the 2nd local homology from the previous dataset still holds here so let us go straight into a few examples.

We observed earlier that the 1st local homology in this dataset is usually of lower dimension compared to the other dataset. We observe a similar thing in the 2nd local homology case as well, though perhaps a little less.

An interesting thing that we observe is a cycle that is common to both datasets. To see this, we can look again at $\wordv{bank}$ when $\epsilon = 62^\circ$. We have the 2nd local homology being $\zmod{2}$ detecting the existence of the cycle `depository', `treasury', `fund', `institution' as seen in Figure \ref{fig:bank_62_glove}, which actually persists for a long time -- it dies at $\epsilon = 68^\circ$. This cycle is homologous to `depository', `treasury', `fund', `corporation', `institution' -- a cycle of vectors of those words we also observed in the previous dataset when $\epsilon = 74^\circ$. Also, by replacing `corporation' with `institution' in the original cycle, we obtain a cycle observed in the previous dataset at $\epsilon = 75^\circ$.

\begin{figure}[h]
	\caption{A part of the link of $\wordv{bank}$ at $\epsilon = 62^\circ$; distance is the geodesic distance to $\wordv{bank}$.}
	\includegraphics[width=0.45\textwidth]{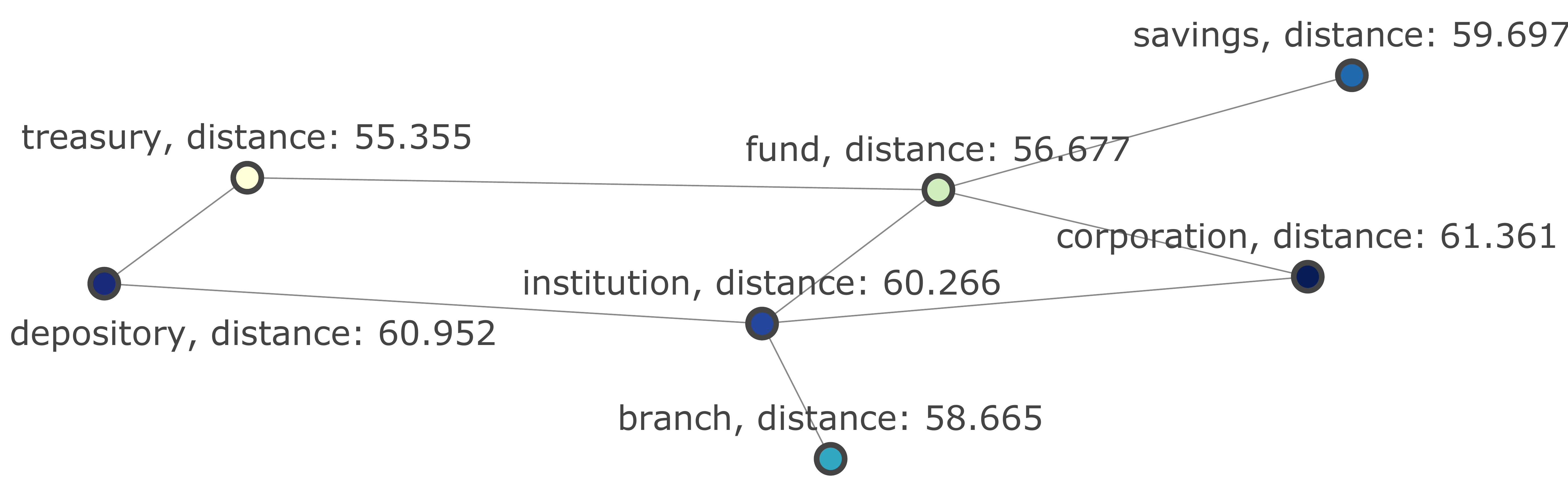}
	\label{fig:bank_62_glove}
\end{figure}

Of course, there are cycles like `invest', `savings', `deposit', `transaction', `trading' appearing at $\epsilon = 65^\circ$, which look like nothing we have seen in the previous dataset -- Figure \ref{fig:bank_65_glove}. In fact, most cycles are ``new'' in the sense that they do not generate non-trivial elements of the local homology groups in the last dataset. Also, we do not see any point with extremely high 2nd local homology -- like `corporation' in $D_\text{skip-gram}$. In this dataset, the vector corresponding to `corporation' has at most 2nd local betti number being 2 and the cycles that generate 2nd local homology are all different from the previous dataset.

\begin{figure}[h]
	\caption{The link of $\wordv{bank}$ at $\epsilon = 65^\circ$; distance is the geodesic distance to $\wordv{bank}$.}
	\includegraphics[width=0.45\textwidth]{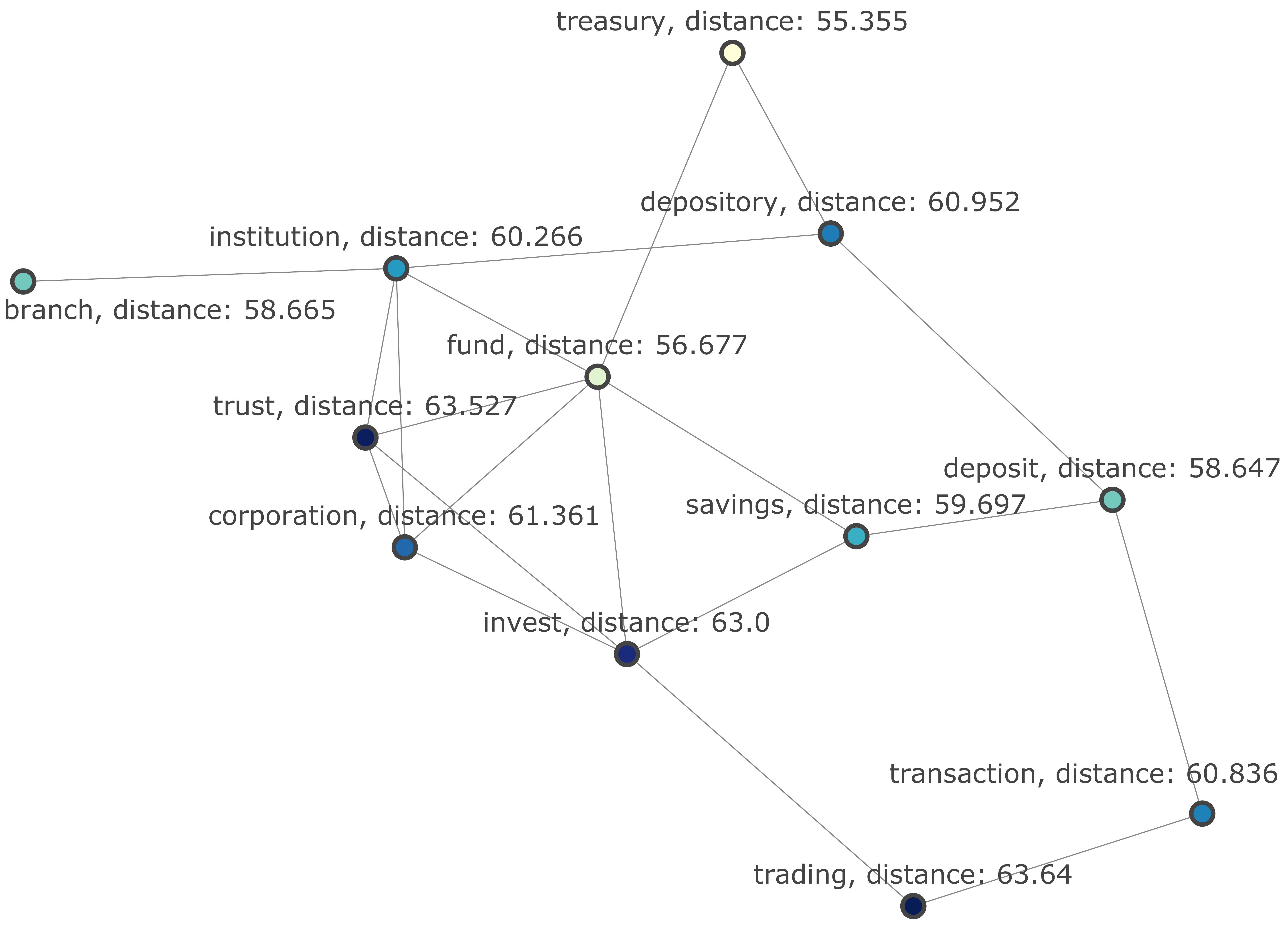}
	\label{fig:bank_65_glove}
\end{figure}

\section{Discussion}

As we can see, local homology does detect interesting structures in the datasets that we are using. Even though the generators of local homology groups are vastly different in the two datasets, we see that the fact that `bank' is a homonym can be seen in both of them by looking at the generators of 1st local homology. By no means is this conclusive but it does suggest that local homology carries relevant information to the problem of word sense disambiguation and therefore this approach is worth exploring further.

However, there still are a few problems with the algorithm. As it was noted earlier, the clusters have little structure. There can be a few reasons for this:
\begin{itemize}
	\item The datasets are too small.
	\item the VR-complex is not the right one to use in this case.
	\item The condition of local homology preserving path for clustering is too limiting.\footnote{as opposed to the earlier proposed condition based on persistent local homology}
	\item The algorithm is not robust to noise and data is noisy.
\end{itemize}
It would be useful to run the algorithm for bigger datasets and this is definitely a part of the future work. Also, it would be very beneficial to better understand how the Vietoris Rips construction affects local homology and to compare how the results change if we use different constructions (e.g. the \v{C}ech complex). Again, due to computational limitations, we used the computationally cheaper construction, which is the VR-complex. Regarding robustness, it is easy to come up with homotopy equivalent simplicial complexes that have different local homologies, which means that the usual guarantees that come from the Nerve Theorem\footnote{statement of the theorem can be found in \citep[Corollary 4G.3.]{hatcher}} do not hold here. In fact, given a value of $\epsilon$, small perturbations can easily change the local homology of many simplices in $\vr{\epsilon}{S}$. As it was suggested earlier, one could change the setting of the algorithm and look at persistent local homology instead with the hope that it would make the algorithm more robust.

Despite the limitations of this algorithm, we believe that it is beneficial to study word embedding problems using local topological constructions (e.g. local persistent homology) with the hope of coming up with a word-sense disambiguation algorithm.

\section{Conclusions and future work}

In this work we have argued that TDA is a promising framework to study word embeddings. We have also presented and implemented a local homology clustering algorithm and tested it on two datasets coming from word embeddings. We have seen that even though some word vectors exhibit interesting local structure that is captured by local homology, the limiting requirement of isomorphic local homology groups render the algorithm susceptible to noise and produce almost trivial clustering (at least on the datasets considered). As previously discussed, we think that moving to persistent local homology seems promising.

Therefore, we would expect future work to include:
\begin{itemize}
	\item Looking into relaxing the limiting isomorphism condition by
	considering persistent local homology or other local
	constructions.
	\item Exploring how stable such an approach is with respect to noise.
	\item If results are successful on word embedding datasets, introducing this approach as a word sense disambiguation algorithm.
\end{itemize}

\section{Acknowledgements}
I would like to thank my master thesis supervisor Vidit Nanda for many insightful conversations, support and guidance.

\bibliography{bibliography}

\end{document}